\documentclass[11pt]{amsart}
\usepackage{longtable}
\usepackage{array}
\usepackage{multicol}
\usepackage{graphicx}
\usepackage[colorlinks=true,citecolor=black,linkcolor=black,urlcolor=blue]{hyperref}

\makeatletter
 \def\@textbottom{\vskip \z@ \@plus 1pt}
 \let\@texttop\relax
\makeatother
\usepackage{amsmath, amssymb, amsbsy, amsfonts, amsthm, latexsym, amsopn, amstext, amsxtra, euscript, amscd, color, mathrsfs}
\usepackage[normalem]{ulem}
\usepackage{soul}

\usepackage{cite}

\hoffset -2.2cm

\voffset -1cm

\textwidth 17.2truecm

\textheight 24truecm

\makeatletter

\@namedef{subjclassname@2020}{%
  \textup{2020} Mathematics Subject Classification}
\makeatother

\PassOptionsToPackage{hyphens}{url}\usepackage{hyperref}
 
 \usepackage[capbesideposition=outside,capbesidesep=quad]{floatrow}

\captionsetup{labelfont=bf, justification=justified, singlelinecheck=false, position=above}

\restylefloat{table}
\restylefloat{table}
         
\usepackage{multirow,caption}
            
\usepackage{amscd}
\usepackage{color,enumerate}

\newcommand{\RNum}[1]{\lowercase\expandafter{\romannumeral #1\relax}}

\setlength{\marginparwidth}{2.2cm}
\usepackage[colorinlistoftodos,prependcaption,textsize=tiny]{todonotes}

\theoremstyle{plain}
\newtheorem{thm}{Theorem}[section]
\newtheorem{lem}[thm]{Lemma}

\newtheorem{prop}[thm]{Proposition}

\newtheorem{rmk}[thm]{Remark}

\newtheorem{thm-con}[thm]{Theorem-Conjecture}
\numberwithin{equation}{section}
\newtheorem{def1}[thm]{Definition}

\def\F{{\mathbb F}}
\def\Tr{{\rm Tr}}

\begin{document}

\title[Some new classes of permutation polynomials]{Some new classes of permutation polynomials and their compositional inverses}

 \author[S. U. Hasan]{Sartaj Ul Hasan}
 \address{Department of Mathematics, Indian Institute of Technology Jammu, Jammu 181221, India}
  \email{sartaj.hasan@iitjammu.ac.in}
  
   \author[R. Kaur]{Ramandeep Kaur}
  \address{Department of Mathematics, Indian Institute of Technology Jammu, Jammu 181221, India}
  \email{2022rma0027@iitjammu.ac.in}

 \thanks{The first named author is partially supported by Core Research Grant CRG/2022/005418 from the Science and Engineering Research Board, Government of India. The second named author is supported by the Prime Minister’s Research Fellowship PMRF ID-3003658 at IIT Jammu.}

\keywords{Finite fields, permutation polynomials, compositional inverses, quasi-multiplicative equivalence, linear equivalence.}

\subjclass[2020]{12E20, 11T06, 11T55}

\begin{abstract}
We focus on the permutation polynomials of the form $L(X)+\Tr_{m}^{3m}(X)^{s}$ over $\F_{q^3}$, where $\F_q$ is the finite field with $q=p^m$ elements, $p$ is a prime number, $m$ is a positive integer, $\Tr_{m}^{3m}$ is the relative trace function from $\F_{p^{3m}}$ to $\F_{p^{m}}$, $L(X)$ is a linearized polynomial over $\F_{q^{3}}$, and $s>1$ is a positive integer. More precisely, we present six new classes of permutation polynomials over $\F_{q^3}$ of the aforementioned form: one class over finite fields of even characteristic, three classes over finite fields of odd characteristic, and the remaining two over finite fields of arbitrary characteristic. Furthermore, we show that these classes of permutation polynomials are inequivalent to the known ones of the same form. We also provide the explicit expressions for the compositional inverses of each of these classes of permutation polynomials.
\end{abstract}
\maketitle

\section{Introduction}
For a prime power $q=p^m$, let $\F_q$ denote the finite field with $q$ elements, $\F_q^*$  the multiplicative cyclic group of all non-zero elements of $\F_q$, and $\F_q[X]$ the ring of polynomials over $\F_q$ in the indeterminate $X$. A polynomial $f(X)\in \F_q[X]$ is a permutation polynomial (PP) if the induced map $c \mapsto f(c)$ is a bijection from $\F_q$ to itself. It may be noted that permutation polynomials were first studied by Hermite~\cite{CC} over the finite fields of prime order. Later, Dickson~\cite{LE} investigated PPs over arbitrary finite fields. For any PP $f(X)\in \F_q[X]$, there exists a unique  polynomial $f^{-1}(X)\in \F_q[X]$ modulo $X^q-X$ such that $f^{-1}(f(X))\equiv X \pmod{X^q-X}$. The polynomial  $f^{-1}$ is referred to as the compositional inverse of $f$ over $\F_q$. Finding PPs and their compositional inverses holds significant importance from both mathematical and practical perspectives. In fact, PPs and their compositional inverses have a wide range of applications in cryptography\cite{RSA,Schwenk_Cr_98}, coding theory \cite{DH,Chapuy_C_07}, and combinatorial design theory ~\cite{DY}. In 1991, Mullen~\cite{M} posed a problem concerning the computation of the coefficients of the compositional inverses of permutation polynomials. The significance of explicitly determining the compositional inverses of PPs stems from the fact that PPs and their inverses have plethora of applications. For example, in a block cipher with a Substitution-Permutation Network (SPN) structure, the compositional inverse of the substitution box (S-box) is crucial throughout the decryption process. Generally, constructing PPs and determining their compositional inverses is a challenging problem that has garnered the attention of many researchers.

We now provide a brief account of the development and motivation behind the specific form of permutation polynomials that we are considering in this paper. In 2003,  Helleseth and Zinoviev~\cite{HZ} studied PPs of the form $\left(\frac{1}{X^2+X+\delta}\right)^{2^{\ell}}+X$ to establish new Kloosterman sums identities over $\F_{2^{m}}$, where $\ell\in \{0,1\}$ and $\delta\in \F_{2^m}$. Inspired by their work, Yuan and Ding ~\cite{YD,YD1} initiated an investigation for PPs of the form 
 $f_{1}(X):=(X^{p^i}-X+\delta)^s+L(X)$  over $\F_q$, where $i,s$ are positive integers, $\delta\in \F_q$ and $ L(X):=\sum_{i=0}^{m-1}\alpha_{i}X^{p^i}$ is a linearized polynomial in $\F_q[X]$. Following this, several classes of PPs of the form $f_{1}(X)$ have been constructed in the literature~\cite{LHT, ZZH, ZH1, ZH, ZYP}. The study of permutation polynomials of the shape $f_{1}(X)$ may be divided into two specific types of PPs, namely, $f_{2}(X):= \left(X^{p^i}-X+\delta \right)^{s_1}+\left(X^{p^i}-X+\delta \right)^{s_2}+X$, and $f_{3}(X):= \left(\Tr_{m}^{n}(X)+\delta \right)^{s}+L(X)$, where $s_1, s_2, s, i, n, m$ are positive integers such that $m\mid n$ and $\delta \in \F_q$. It turns out that Zeng, Zhu, Li, and Liu~\cite{ZZLL} proposed several classes of PPs of the form $f_{2}(X)$ over finite fields of even characteristic. Subsequently, Li, Wang, Li and Zeng~\cite{LWLLZ}, as well as Liu, Xie, Liu and Zou~\cite{LXLZ}, investigated these types of PPs over finite fields of odd characteristic. In~\cite{ZTT}, Zeng, Tian and Tu explored PPs of the form $f_{3}(X)$ over finite fields of characteristic $2$, where $\delta \in \F_{2^n}$, $L(X)=\Tr_m^n(X)+X$ or $X$, 
and the exponent $s$ satisfies either of the  two conditions:
$s(2^m+1)\equiv{2^m+1}~\pmod{2^n-1}$ or $s(2^m-1)\equiv{2^m-1}\pmod{2^n-1}$.
Later, Li, Wang, Wu and Zhu~\cite{LWWZ} proposed many new classes of PPs of the forms 
$f_{4}(X):=\left(\Tr_{m}^{2m}(X)^{k_1}+\delta\right)^{s_1}+\left(\Tr_{m}^{2m}(X)^{k_2}+\delta\right)^{s_2}+X$, and $f_{5}(X):=\left(\Tr_{m}^{2m}(X)^k+\delta\right)^s+X $ over $\F_{2^{2m}}$. Furthermore, Li and Cao \cite{LX1} constructed several classes of PPs of the form $f_{1}(X)$ and $f_{2}(X)$  with the parameter $i=m$ over $\F_{p^{2m}}$. Recently, Li and Cao~\cite{LX} introduced some classes of PPs of the form $f_{4}(X)$ and $f_{5}(X)$ over $\F_{2^{2m}}$, by replacing $X$ with $L(X)=a\Tr_{m}^{2m}(X)+bX$, where $a\in \F_{2^{2m}}$ and $b \in \F_{2^m}^* $. 

Motivated by the work of several authors, as discussed above, we consider it worthwhile to explore these types of PPs further. Specifically, we focus our attention on the PPs studied by Zeng, Tian and Tu~\cite{ZTT} of the form $f_3(X)=\left(\Tr_{m}^{n}(X)+\delta \right)^{s}+L(X)$ over finite fields of even characteristic, with $L(X)$ having trivial coefficients. This raises an interesting and intriguing question of what would happen if $L(X)$ has nontrivial coefficients or if the characteristic of the finite field is not even. In this paper, we explore this question and investigate PPs of the form $f_3(X)$ over  $\F_{q^3}$, where $\F_q$ is a finite field with arbitrary characteristic, $L(X)$ is a linearized polynomial with non-trivial coefficients, and $\delta=0$. In fact, we construct six new classes of PPs of the form $f_3(X)$ over $\F_{q^3}$ by employing the technique proposed by Wu and Yuan ~\cite[Theorem 3]{WY1}, which is derived from the local method~\cite[Lemma 1]{WY1} (also see~\cite[Theorem 2.5]{Y}), and by analyzing certain equations over finite fields. It is noted in~\cite{W} that the local method \cite[Lemma 1]{WY1} is indeed a special case of AGW criteria~\cite{AGW}. We would also like to draw the reader's attention to the fact that in \cite {LHT, LWLLZ, LWWZ, LXLZ, YD, YD1, ZTT, ZZH, ZZLL, ZH1, ZH, ZYP}, the authors did not provide the compositional inverses for the PPs. However, for every new class of PPs introduced in this paper, we provide the explicit expressions for their compositional inverses. 

The rest of this paper is organized as follows. In Section~\ref{S2}, some definitions and related results are introduced. Section~\ref{S3} presents six new classes of permutation polynomials of the form $L(X)+\Tr_{m}^{3m}(X)^{s}$ over $\F_{q^3}$, where $\Tr_{m}^{3m}$ is the relative trace function from $\F_{p^{3m}}$ to $\F_{p^{m}}$, $L(X)$ is a linearized polynomial over $\F_{q^{3}}$, and $s>1$ is a positive integer. In Section~\ref{S4}, we provide the explicit expressions for the compositional inverses of the permutation polynomials obtained in Section \ref{S3} and investigate the inequivalence of these classes with the known ones of the same form. Finally, Section~\ref{S5} concludes the paper.

 \section{Preliminaries}\label{S2}
This section consists of some basic definitions and an important result that will be used in the subsequent sections.
\begin{def1}
For two positive integers $m$ and $n$ with $m\mid n$, we use $\Tr_{m}^{n}(.)$ to denote the (relative) trace function from $\F_{p^n}$ to $\F_{p^m}$, which is defined as follows
$$\Tr_{m}^{n}(X):=\displaystyle\sum_{i=0}^{\frac{n-m}{m}}X^{p^{mi}}.$$
        \end{def1}    

\begin{def1}
A polynomial of the form
$$L(X)=\displaystyle\sum_{i=0}^{n-1}\alpha_{i}X^{q^i}$$
with coefficients in an extension field  $\F_{q^n}$ of $\F_{q}$ is called $q$-linearized polynomial over $\F_{q^n}$.
\end{def1}

\begin{def1}
For a prime $p$ and for an integer $a\in \mathbb{Z}$, the $p$-adic valuation of $a$ is defined as follows
$$\nu_{p}(a):=\max\{e\in \mathbb{Z}\mid p^e \;\mbox{divides}\; a\},$$
where $\mathbb Z$ is the set of integers.
\end{def1}
\begin{def1}
For an odd prime $p$ and for $c\in \F_{p}$, we define the Legendre symbol as follows
\[
\eta(c):=\left(\frac{c}{p}\right)=
\begin{cases}
   1 & \text{if } c \text{ is quadratic residue modulo } p \text{ and } c \not\equiv 0 \pmod p, \\
    -1 & \text{if } c \text{ is nonquadratic residue modulo } p \text{ and } c \not\equiv 0 \pmod p,\\
    0 & \text{if } c \equiv 0 \pmod p.
\end{cases}
\]

\end{def1}
\begin{def1}
For a positive integer $d$, the unit circle of order $d$ is denoted by $\mu_{d}$ and is defined as
$$\mu_d:=\{X\in \overline{\F}_q : X^d=1\}, $$
where $\overline{\F}_q$ is the algebraic closure of $\F_q$.
\end{def1}

The following lemma, which is crucial to this paper and will be frequently used in Section~\ref{S3}, is stated below after defining some notations that will be used in its statement.

For positive integers $i$ with $1\leq i\leq n$, let $m_i$'s be positive integers, $u_i\in \F_{q^n}$, and  $a_i\in \F_{q^n}$ with $a_i^{q^{n-1}+q^{n-2}+\cdots+q^2+q+1}=1$. We now introduce the following notations:
$$A_i(X):=  X^{q^{n-1}}+a_iX^{q^{n-2}}+a_i^{1+q^{n-1}}X^{q^{n-3}} +\cdots  + a_i^{1+q^{n-1}+q^{n-2}+\cdots+q^2}X,$$

\begin{equation*}
D_1 := 
\begin{pmatrix}
1 & u_2 & \cdots & u_n \\
a_1^{m_1q} & u_2^{q}a_2^{m_{2}q} & \cdots & u_n^{q}a_1^{m_nq} \\
\vdots  & \vdots  & \ddots & \vdots  \\
a_1^{m_1\sum_{i=1}^{n-1}q^i} & u_2^{q^{n-1}}a_2^{m_2\sum_{i=1}^{n-1}q^i} & \cdots & u_n^{q^{n-1}}a_2^{m_n\sum_{i=1}^{n-1}q^i}
\end{pmatrix},
\end{equation*}
and
\vspace{2mm}
\begin{equation*}
D_2 := 
\begin{pmatrix}
1 & a_1 & a_1^{1+q^{n-1}} & \cdots & a_1^{1+q^{n-1}+q^{n-2}+\cdots+q^{n-j}} & \cdots & a_1^{1+q^{n-1}+q^{n-2}+\cdots+q^2} \\
1 & a_2 & a_2^{1+q^{n-1}} &\cdots & a_2^{1+q^{n-1}+q^{n-2}+\cdots+q^{n-j}} &\cdots & a_2^{1+q^{n-1}+q^{n-2}+\cdots+q^2} \\
\vdots  & \vdots  & \vdots  & \ddots & \vdots  & \ddots & \vdots  \\
1 & a_n & a_n^{1+q^{n-1}} & \cdots & a_n^{1+q^{n-1}+q^{n-2}+\cdots+q^{n-j}} & \cdots & a_n^{1+q^{n-1}+q^{n-2}+\cdots+q^2}
\end{pmatrix}.
\end{equation*}
\vspace{1mm}

\begin{lem}\label{L22} \textup{\cite[Theorem 3]{WY1}} Let $q$ be a prime power, $n>1$ be a positive integer. Let $A_i(X), D_1$ and $D_2$ be defined as above. Moreover, we assume that $m_1, m_2, \ldots, m_n$ are positive integers and $u_2, u_3, \ldots, u_n\in \F_{q^n}$. Then the polynomial
\begin{center}
    $f(X)=A_1^{m_1}(X)+\displaystyle\sum_{i=2}^{n}{u_iA_i^{m_i}(X)}$ 
\end{center}
is a PP over $\F_{q^n}$ if and only if $\gcd(m_1m_2\cdots m_n, q-1)=1$ and the determinant of the matrix $D_1D_2$ is not $0$.
\end{lem}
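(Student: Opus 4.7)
The plan is to prove Lemma~\ref{L22} by first recognizing each $A_i(X)$ as a scalar multiple of a trace form, which lets us factor $f$ as a composition of three maps between finite sets of equal cardinality and reduce the PP question to checking bijectivity of each factor.

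First, since $a_i^{1+q+\cdots+q^{n-1}}=1$ means $a_i$ has $\F_q$-norm $1$, Hilbert 90 yields $b_i\in \F_{q^n}^{*}$ with $a_i = b_i^{q-1}$. Set $t_i := b_i^{q}$ and $r_i := b_i^{-q^{2}}$. The key identity I would establish is
\[
A_i(X) \;=\; t_i\cdot \Tr_{\F_{q^n}/\F_q}(r_i X).
\]
To see this, substitute $a_i = b_i^{q-1}$ into the $k$-th coefficient $a_i^{1+q^{n-1}+\cdots+q^{n-k+1}}$ of $A_i$ and reduce the exponent modulo $q^n-1$; the exponent collapses to $q-q^{n-k+1}$, and after factoring $b_i^{q}$ from every term the remaining sum equals $\sum_{\ell=0}^{n-1}(b_i^{-q^{2}}X)^{q^\ell}=\Tr_{\F_{q^n}/\F_q}(r_i X)$. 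Substituting this identity into $f$ gives
\[
f(X) \;=\; \sum_{i=1}^{n} u_i t_i^{m_i}\,\bigl(\Tr_{\F_{q^n}/\F_q}(r_i X)\bigr)^{m_i}, \qquad u_1:=1.
\]

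Next, since each $\Tr(r_i X)\in\F_q$, I would factor $f$ as
\[
\F_{q^n}\ \xrightarrow{\ T\ }\ \F_q^{\,n}\ \xrightarrow{\ \Psi\ }\ \F_q^{\,n}\ \xrightarrow{\ \Phi\ }\ \F_{q^n},
\]
where $T(X):=\bigl(\Tr(r_i X)\bigr)_{i=1}^n$, $\Psi(y_1,\ldots,y_n):=(y_1^{m_1},\ldots,y_n^{m_n})$, and $\Phi(z_1,\ldots,z_n):=\sum_i u_i t_i^{m_i}z_i$. All four sets have cardinality $q^n$, so $f$ is a permutation iff each of $T,\Psi,\Phi$ is. The $\F_q$-linear map $T$ is bijective iff $\{r_i\}_{i=1}^n$ is an $\F_q$-basis of $\F_{q^n}$ (non-degeneracy of the trace pairing). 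The component-wise power map $\Psi$ is bijective iff $y\mapsto y^{m_i}$ permutes $\F_q$ for every $i$, i.e.\ iff $\gcd(m_i,q-1)=1$ for each $i$, which is equivalent to $\gcd(m_1m_2\cdots m_n,q-1)=1$. The $\F_q$-linear map $\Phi$ is bijective iff $\{u_i t_i^{m_i}\}_{i=1}^n$ is an $\F_q$-basis.

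The final task is to translate the two basis conditions into $\det(D_1D_2)\neq 0$. For $D_1$, the same Hilbert 90 substitution simplifies the exponent $m_j(q+q^2+\cdots+q^{i-1})$ and yields $(D_1)_{ij}=t_j^{-m_j}(u_j t_j^{m_j})^{q^{i-1}}$, so $\det D_1$ equals the nonzero scalar $\prod_j t_j^{-m_j}$ times the Moore determinant $\det\bigl((u_j t_j^{m_j})^{q^{i-1}}\bigr)_{ij}$, which is nonzero iff $\{u_j t_j^{m_j}\}$ is an $\F_q$-basis. For $D_2$, an analogous substitution followed by factoring $b_i^{q}$ from each row and a column reindexing exhibits $D_2$ up to a nonzero scalar and a sign as the Moore determinant of $\{b_i^{-1}\}$; since the Frobenius $\sigma^{-2}$ is an $\F_q$-linear bijection, this is nonzero iff $\{b_i^{-q^{2}}\}=\{r_i\}$ is an $\F_q$-basis. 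Combining via $\det(D_1D_2)=\det D_1\cdot \det D_2$ gives the stated criterion. I expect the main obstacle to be the coefficient-by-coefficient verification of the identity $A_i(X)=t_i\,\Tr(r_i X)$, because the modular arithmetic that collapses $(q-1)(1+q^{n-1}+\cdots+q^{n-k+1})$ to $q-q^{n-k+1}$ is the non-routine algebraic step; once it is in place, the decomposition and the Moore-determinant identifications are essentially bookkeeping.
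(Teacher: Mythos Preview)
The paper does not supply its own proof of this lemma; it is quoted as \cite[Theorem 3]{WY1} and used as a black box, so there is nothing in the paper to compare your argument against directly.

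That said, your proposal is correct. The Hilbert~90 substitution $a_i=b_i^{\,q-1}$ does collapse the exponent $(q-1)(1+q^{n-1}+\cdots+q^{n-k+1})$ to $q-q^{n-k+1}\pmod{q^n-1}$, yielding $A_i(X)=b_i^{\,q}\,\Tr_{\F_{q^n}/\F_q}(b_i^{-q^2}X)$; the three-factor decomposition $f=\Phi\circ\Psi\circ T$ then works exactly as you describe, and your identification of $\det D_1$ and $\det D_2$ with Moore determinants (up to nonzero column/row scalars and a column permutation) is accurate. The one step worth writing out explicitly is the forward implication ``$f$ bijective $\Rightarrow$ each factor bijective'': since all four sets have size $q^n$, bijectivity of $f$ forces $T$ injective and $\Phi$ surjective, hence both bijective, and then $\Psi=\Phi^{-1}\circ f\circ T^{-1}$. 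Your route is in fact the one underlying the cited source, as the present paper hints in Proposition~\ref{L41}: the inverse formula recorded there is precisely $T^{-1}\circ\Psi^{-1}\circ\Phi^{-1}$ in your notation, with $\theta_n$ encoding the last row of $D_2^{-1}$ (the $T^{-1}$ piece) and $\eta_i$ the rows of $D_1^{-1}$ (the $\Phi^{-1}$ piece).
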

\section{Permutation polynomials of the form $L(X)+\Tr_{m}^{3m}(X)^{s}$}\label{S3}
In this section, we investigate the permutation behaviour of the polynomials of the form $L(X)+\Tr_{m}^{3m}(X)^{s}$ over $\F_{q^3}$ for infinitely many positive integers $s>1$ with the condition that $\gcd(s, q-1)=1$ and we propose six such classes of PPs.

Now, we prove the following two lemmas which will be used in the proof of Theorem \ref{T31}. These two lemmas essentially give an exact count for the number of solutions in $\mu_{q^2+q+1}$ of two special types of equations, where $\mu_{q^2+q+1} \subseteq  \F_{q^3}$ is the unit circle of order $q^2+q+1$. Unless otherwise stated, note that in Lemma~\ref{L31}, Lemma~\ref{L32}, and Theorem~\ref{T31}, we shall assume that $q=2^m$, where $m$ is a positive integer.
\begin{lem}\label{L31}
Let $q=2^m$, $h(X)=X^{q+1}+(A+1)X+A \in \F_{q^3}[X]$ with $A\neq 0$. Then
\begin{enumerate}
 \item[$(1)$] $h(X)$ has only one root in $\mu_{q^2+q+1}$ if $A^{q+1}+A+1\neq 0$, and
 \item[$(2)$] $h(X)$ has all $q+1$ roots in $\mu_{q^2+q+1}$ if $A^{q+1}+A+1=0$,
\end{enumerate}
where $\mu_{q^2+q+1}:=\{X\in {\F}_{q^3} : X^{q^2+q+1}=1\}. $
\end{lem}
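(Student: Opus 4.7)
I would first observe that $X=1$ is always a root of $h(X)$: substituting gives $h(1)=1+(A+1)+A=0$ in characteristic $2$, and clearly $1\in\mu_{q^2+q+1}$. So the real question is whether there are other roots of $h$ in $\mu_{q^2+q+1}$, and how many roots $h$ has in $\overline{\F}_q$ to begin with. To handle separability, I would compute $h'(X)=X^q+(A+1)$ (using that $q+1$ is odd in characteristic $2$), observe that its unique root $\beta$ satisfies $\beta^q=A+1$, and verify
\[
h(\beta)=\beta(A+1)+(A+1)\beta+A=A\ne 0.
\]
Hence $h$ and $h'$ share no root, so $h$ has exactly $q+1$ distinct roots in $\overline{\F}_q$.

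The central step is to evaluate $X_0^{q^2+q+1}$ for an arbitrary root $X_0$ of $h$. From $h(X_0)=0$ I have $X_0^{q+1}=(A+1)X_0+A$. Applying the Frobenius $x\mapsto x^q$ (which is additive in characteristic $2$) gives $X_0^{q^2+q}=(A^q+1)X_0^q+A^q$. Multiplying by $X_0$ and substituting $X_0^{q+1}=(A+1)X_0+A$ once more, and using $2A^q=0$, I obtain after a short simplification
\[
X_0^{q^2+q+1}=(A^{q+1}+A+1)\,X_0+(A^{q+1}+A),
\]
which in characteristic $2$ is equivalent to
\[
X_0^{q^2+q+1}+1=(A^{q+1}+A+1)(X_0+1).
\]

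The two claims follow immediately by inspecting this identity. If $A^{q+1}+A+1\ne 0$, then $X_0\in\mu_{q^2+q+1}$ forces $X_0+1=0$, i.e.\ $X_0=1$, so $X=1$ is the unique root of $h$ lying in $\mu_{q^2+q+1}$. If $A^{q+1}+A+1=0$, the right-hand side vanishes identically, and every one of the $q+1$ distinct roots of $h$ automatically satisfies $X_0^{q^2+q+1}=1$, so all $q+1$ roots lie in $\mu_{q^2+q+1}$. The only delicate ingredient is the algebraic manipulation producing the boxed identity for $X_0^{q^2+q+1}$; the remaining steps are bookkeeping that is clean in characteristic $2$ thanks to the additive Frobenius and the vanishing of doubled terms.
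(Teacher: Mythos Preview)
Your proof is correct and follows essentially the same route as the paper: both raise $h(X_0)=0$ to the $q$th power, multiply by $X_0$, and back-substitute $X_0^{q+1}=(A+1)X_0+A$ to obtain (in your formulation) the single identity $X_0^{q^2+q+1}+1=(A^{q+1}+A+1)(X_0+1)$; the paper writes this only for $\alpha\in\mu_{q^2+q+1}$, arriving at $(A^{q+1}+A+1)(\alpha+1)=0$. Your treatment is slightly cleaner in that this one identity handles both cases uniformly and you also verify separability of $h$ (so that Case~(2) really gives $q+1$ distinct roots), whereas the paper handles Case~(2) via a separate substitution trick---showing $A\in\mu_{q^2+q+1}$ and rewriting $h(\beta)=0$ as $(A^q\beta)^{q+1}+A^q\beta+1=0$---and leaves separability implicit.
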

\begin{proof} It is clear that $1$ is always a root of $h(X)$ in $\mu_{q^2+q+1}$.

\textbf{Case 1.} First, we assume that $A^{q+1}+A+1\neq 0$. Now, if possible, suppose that $\alpha \in \mu_{q^2+q+1}\setminus\{1\}$ is a root of $h(X)$, that is,
\begin{equation}\label{E6}
\alpha^{q+1}+(A+1)\alpha+A=0.
\end{equation}
By raising $q$th power to Equation~\eqref{E6}, we  obtain 
\begin{equation}\label{E7}
    \alpha^{q^2+q}+(A^q+1)\alpha^q+A^q=0.
\end{equation}
Now, multiply Equation~\eqref{E7} by $\alpha$, we get

\[
 \alpha^{q^2+q+1}+(A^q+1)\alpha^{q+1}+A^q\alpha =0 .
\]
Using the value of $\alpha^{q+1}$ from Equation~\eqref{E6} in the above equation, we obtain
\begin{equation*}
    (A^{q+1}+A+1)\alpha+(A^{q+1}+A+1)=0.
\end{equation*}
Since $A^{q+1}+A+1\neq 0$, we get $\alpha=1$, which is a contradiction.

\textbf{Case 2.} Now, suppose $A^{q+1}+A+1= 0$ and $\beta$ is any root of $h(X)$, that is,
\begin{equation}\label{E8-1}
    \beta^{q+1}+(A+1)\beta+A=0.
\end{equation}

Since $A^{q+1}+A+1= 0$ implies $A^{q^2+q+1}=1$. Thus, Equation~\eqref{E8-1} can be written as 
\begin{equation*}
       A^{q^2+q+1}\beta^{q+1}+A^{q+1}\beta+A=0
\end{equation*}
which implies
\begin{equation*}
    (A^q\beta)^{q+1}+A^q\beta+1=0,
\end{equation*}
as $A\neq 0.$ So, we have $A^q\beta \in \mu_{q^2+q+1}$. Hence  $\beta\in \mu_{q^2+q+1}$ as $A\in \mu_{q^2+q+1}$, which completes the proof.

\end{proof}

\begin{lem}\label{L32}
Let $q=2^m$, $m$ be odd. Then $X^{q+1}+(A+1)X^{q}+A=0$ has no solution in $\mu_{q^2+q+1}\setminus\{1\}\subseteq \F_{q^3}$, where $A\in \F_{q}
^*$.
\end{lem}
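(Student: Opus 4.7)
The plan is to mimic the strategy used in Lemma~\ref{L31}: assume $\alpha$ is any root in $\mu_{q^2+q+1}$, produce two related polynomial relations by raising the equation to $q$-th and $q^2$-th powers, then combine them using $\alpha^{q^2+q+1}=1$ and $\alpha^{q^3}=\alpha$ to collapse the system to a single linear equation in $\alpha$ over $\F_q$. The hypothesis that $m$ is odd will appear only at the very end, via $\F_4\not\subseteq\F_{2^m}$.

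First I would note that $\alpha=1$ is indeed a root since $1+(A+1)+A=0$ in characteristic $2$, which matches the statement that only $\alpha=1$ need be excluded. Then, let $\alpha\in\mu_{q^2+q+1}$ satisfy
\begin{equation*}
\alpha^{q+1}+(A+1)\alpha^{q}+A=0. \tag{$\ast$}
\end{equation*}
Raising $(\ast)$ to the $q$-th power (and using $A^q=A$ because $A\in\F_q$) and then multiplying by $\alpha$ gives, after $\alpha^{q^2+q+1}=1$,
\begin{equation*}
1+(A+1)\alpha^{q^2+1}+A\alpha=0. \tag{I}
\end{equation*}
Raising $(\ast)$ to the $q^2$-th power and using $\alpha^{q^3}=\alpha$ gives
\begin{equation*}
\alpha^{q^2+1}+(A+1)\alpha+A=0. \tag{II}
\end{equation*}

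Next I would eliminate $\alpha^{q^2+1}$: solve (II) to get $\alpha^{q^2+1}=(A+1)\alpha+A$, substitute into (I), and simplify in characteristic $2$. A short expansion yields
\begin{equation*}
(A^2+A+1)\,\alpha = A^2+A+1.
\end{equation*}
Hence either $\alpha=1$, or $A^2+A+1=0$. The second possibility forces $A$ to lie in $\F_4\setminus\F_2$, i.e., to be a primitive cube root of unity. But $\F_4\cap\F_{2^m}=\F_2$ whenever $m$ is odd, so no such $A\in\F_q^{*}$ exists. Therefore $\alpha=1$, proving the lemma.

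The only real obstacle is the algebraic elimination step: one must choose which power of $(\ast)$ to raise and in what order to combine, so that the $\alpha^{q^2+1}$ terms cancel and leave a tractable expression. Once the computation lands on the factor $A^2+A+1$, the parity hypothesis on $m$ finishes the argument immediately.
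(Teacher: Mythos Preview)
Your argument is correct and follows essentially the same route as the paper: raise the defining equation to suitable Frobenius powers, use $\alpha^{q^2+q+1}=1$ to eliminate the mixed terms, and reduce to $(A^2+A+1)(\alpha+1)=0$, whence the parity hypothesis on $m$ forces $\alpha=1$. The only cosmetic difference is that the paper combines the $q^2$-th power of $(\ast)$ (after multiplying by $\alpha^q$) with $(\ast)$ itself via two successive additions to reach $(A^2+A+1)(\alpha^q+1)=0$, whereas you substitute (II) into (I) directly and land on $(A^2+A+1)(\alpha+1)=0$; both are equivalent and the substitution is arguably cleaner.
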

\begin{proof} Let $h(X)=X^{q+1}+(A+1)X^{q}+A$. It is clear that $h(1)=0$. Now, suppose $\alpha \in \mu_{q^2+q+1}\setminus\{1\}$ such that $h(\alpha)=0$, that is,
\begin{equation}\label{E12}
\alpha^{q+1}+(A+1)\alpha^q+A=0.
\end{equation}
We raise power $q^2$ to Equation~\eqref{E12} to get
\begin{equation}\label{E13}
(A+1)\alpha^{q+1}+A\alpha^q+1=0.
\end{equation}
Now by adding Equation~\eqref{E12} and Equation~\eqref{E13}, we obtain
\begin{equation}\label{E14}
    \alpha^{q+1}+\frac{\alpha^q}{A}+1+\frac{1}{A}=0.
\end{equation}
We now add Equation~\eqref{E12} and Equation~\eqref{E14} so as to get
\begin{equation*}
    \alpha^{q}(A+1+\frac{1}{A})+A+1+\frac{1}{A}=0
\end{equation*}
which implies
\begin{equation}\label{E15}
    \alpha^{q}(A^2+A+1)+A^2+A+1=0.
\end{equation}
Note that $A^2+A+1\neq 0$ as $A\in\F_{2^m}^*$ and $m$ is  odd.
Hence Equation~\eqref{E15} implies $\alpha=1$, which is a contradiction, and we are done.
\end{proof}
\begin{rmk}\label{rmk31}
It follows from the proof of Lemma~\ref{L32} that if $q=2^m$, where $m$ is any positive integer, and if  $A\in \F_{q}^*$ such that $A^2+A+1\neq 0$, then $X^{q+1}+(A+1)X^{q}+A=0$ has no solution in $\mu_{q^2+q+1}\setminus\{1\}\subseteq \F_{q^3}$.    
\end{rmk}
By using Lemma~\ref{L31} and Lemma~\ref{L32}, we discuss the permutation behaviour of a class of polynomials over finite fields of even characteristic in the following theorem. 
\begin{thm}\label{T31}
Let $k,m$ be positive integers such that $\nu_{3}(k)\leq \nu_{3}(m)$, $q=2^m$, $a\in \mu_{q^2+q+1}  \subseteq \F_{q^3}$, and 
\begin{center}
$f_1(X)=(a+a^{2^k})X^q+(a^{1+q^2}+a^{2^{k}(1+q^2)})X+\Tr_{m}^{3m}(X)^s$  
\end{center} be a polynomial in $\F_{q^3}[X]$. Then
\begin{enumerate}
    \item[$(1)$] if $q\equiv 2\pmod{3}$, $f_1(X)$ is a PP if and only if $a\neq 1$ and $\gcd(s, q-1)=1$, 
   \item[$(2)$]if $q\equiv 1\pmod{3}$ and if $2\nmid k$, $f_1(X)$ is a PP if and only if $a\neq 1$ and $\gcd(s, q-1)=1$.
\end{enumerate}
\end{thm}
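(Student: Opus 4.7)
The plan is to apply Lemma~\ref{L22} with $n=3$. Identifying $f_1(X)$ with the template $A_1^{m_1}(X)+\sum_{i=2}^{n}u_{i}A_{i}^{m_i}(X)$, I take $a_1=a$, $a_2=a^{2^k}$, $a_3=1$ (all lying in $\mu_{q^2+q+1}$), $m_1=m_2=1$, $m_3=s$, and $u_2=u_3=1$. For $n=3$ one has $A_i(X)=X^{q^2}+a_iX^{q}+a_i^{1+q^2}X$; in particular $A_3(X)=\Tr_m^{3m}(X)$, while in characteristic $2$ the two $X^{q^2}$ contributions from $A_1$ and $A_2$ cancel, so $A_1(X)+A_2(X)+A_3^s(X)$ coincides with $f_1(X)$. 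Lemma~\ref{L22} then reduces the PP question to the two requirements $\gcd(s,q-1)=1$ and $\det(D_1)\det(D_2)\ne 0$.

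The necessity of $a\ne 1$ is immediate: when $a=1$, rows $2$ and $3$ of $D_2$ coincide, so $\det(D_2)=0$; equivalently, $f_1$ collapses to $\Tr_m^{3m}(X)^s$, whose image lies in $\F_q\subsetneq\F_{q^3}$. Henceforth I assume $a\in\mu_{q^2+q+1}\setminus\{1\}$ and must prove $\det(D_1)\det(D_2)\ne 0$.

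The strategy is to expand $\det(D_1)$ and $\det(D_2)$ explicitly (each becomes a six-monomial expression in $a$ and $a^{2^k}$), factor out the common term $a+a^{2^k}$---nonzero because the hypothesis $\nu_3(k)\le\nu_3(m)$ forces $\gcd(2^k-1,q^2+q+1)=1$ and hence $a^{2^k-1}\ne 1$---and then, via a change of variable $X=a^{\lambda}$ with $\lambda$ chosen so that $X\in\mu_{q^2+q+1}$, recast each residual factor in one of the two shapes
\[
X^{q+1}+(A+1)X+A=0 \qquad\text{or}\qquad X^{q+1}+(A+1)X^{q}+A=0,
\]
with $A\in\F_q$ built from $a$. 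Lemma~\ref{L31} and Lemma~\ref{L32} (or Remark~\ref{rmk31}) then force $X=1$, which unwinds to $a=1$, contradicting the hypothesis and establishing the required nonvanishing.

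The case split in the theorem mirrors the dichotomy in Lemma~\ref{L31}. When $q\equiv 2\pmod 3$, one has $3\nmid q^2+q+1$, so no nontrivial $A\in\F_q$ can satisfy $A^{q+1}+A+1=0$ and only the good branch Lemma~\ref{L31}(1) is ever invoked. When $q\equiv 1\pmod 3$, cube roots of unity sit in $\F_q^*\cap\mu_{q^2+q+1}$ and the exceptional branch Lemma~\ref{L31}(2) becomes a real threat; the supplementary hypothesis $2\nmid k$ (together with $\nu_3(k)\le\nu_3(m)$) is precisely what is needed to keep the auxiliary $A$ off the cube-root locus, ensuring $A^2+A+1\ne 0$ so that Remark~\ref{rmk31} applies. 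The main obstacle I foresee is the algebraic bookkeeping in expanding the two determinants and pinning down the right exponent $\lambda$; once the substitution is in place, the auxiliary lemmas close the argument mechanically.
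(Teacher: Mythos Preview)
Your framework---reduce via Lemma~\ref{L22}, then feed the determinant conditions into Lemmas~\ref{L31}--\ref{L32}---matches the paper, and your reading of the case split is in the right spirit. The gap is in the passage from $\det D_i=0$ to an equation of Lemma~\ref{L31}/\ref{L32} shape. The paper does \emph{not} factor out $a+a^{2^k}$ (indeed, under the stated hypotheses that quantity vanishes on $\mu_{q^2+q+1}$ only at $a=1$, so pulling it out carries no more information than $(a+1)$). Instead, after substituting $x=a^q,\,y=a^{q^2}$ (for $D_1$) or $x=a,\,y=a^{q^2}$ (for $D_2$) and regrouping, the six-term relation collapses---upon dividing by $x+1$ and $y+1$---to
\[
\left(\frac{x(y+1)}{x+1}\right)^{2^k-1}=1.
\]
Set $\omega=\dfrac{x(y+1)}{x+1}\in\F_{q^3}$. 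The hypothesis $\nu_3(k)\le\nu_3(m)$ is invoked \emph{here}: it gives $\gcd(k,3m)=\gcd(k,m)$, hence $\omega\in\F_{2^{\gcd(k,m)}}\subseteq\F_q$. Only now can one set $A:=\dfrac{1}{\omega+1}\in\F_q$ and rewrite $\omega=\dfrac{x(y+1)}{x+1}$ (using $a^{1+q+q^2}=1$) as $a^{q+1}+(A+1)a+A=0$ for $D_1$, respectively $a^{q+1}+(A+1)a^q+A=0$ for $D_2$.

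So $A$ is not ``built from $a$'' but from the auxiliary root of unity $\omega$, and producing the $(2^k-1)$-th-power identity is the step your plan omits. Without it there is no reason the residual factor, which still involves $a^{2^k}$ in several places, should be free of $k$ or yield a coefficient in $\F_q$ (recall Lemma~\ref{L32} and Remark~\ref{rmk31} need $A\in\F_q^*$). The same identity is also what makes $2\nmid k$ effective in Case~(2): if $A^2+A+1=0$ then $\omega^3=1$, and together with $\omega^{2^k-1}=1$ and $\gcd(3,2^k-1)=1$ this forces $\omega=1$, the desired contradiction.
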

\begin{proof}
We first express $f_1(X)$ into the form as that of $f(X)$ given in Lemma~\ref{L22}. Then the determinants $\det D_1$ and $\det D_2$ corresponding to $f_1(X)$ are given by
\begin{equation*}
\det D_1=a^{(2^k+1)q}a^{2^kq^2}+a^q+a^{2^kq}+a^{(2^k+1)q}a^{q^2}+a^{q+q^2}+a^{2^kq}a^{2^kq^2},
\end{equation*}
 and 
\begin{equation*}
    \det D_2=a^{(2^k+1)}a^{2^kq^2}+a+a^{2^k}+a^{(2^k+1)}a^{q^2}+a^{1+q^2}+a^{2^k}a^{2^kq^2}.
\end{equation*}

\textbf{Case 1.}  Let $q\equiv 2\pmod{3}$. Assume that $f_1(X)$ is a PP over $\F_{q^3}$, we have to show that $a\neq 1$ and $\gcd(s, q-1)=1$. If possible, suppose that $a=1$, we get $\det D_1=0$ and $\det D_2=0$. Therefore, by Lemma~\ref{L22}, $f_1(X)$ is not a PP over $\F_{q^3}$. Now, if $\gcd(s, q-1)\neq 1$, then again by Lemma~\ref{L22}, $f_1(X)$ is not a PP over $\F_{q^3}$. Hence, $a\neq 1$ and $\gcd(s, q-1)=1.$

 Conversely, suppose that $a\neq 1$ and $\gcd(s, q-1)=1$. In view of Lemma~\ref{L22}, it is enough to show that $\det D_1\neq0$ and $\det D_2\neq0$. Suppose on the contrary, that $\det D_1=0$, that is,
 \begin{equation*}
a^{(2^k+1)q}a^{2^kq^2}+a^q+a^{2^kq}+a^{(2^k+1)q}a^{q^2}+a^{q+q^2}+a^{2^kq}a^{2^kq^2}=0.
\end{equation*}
By substituting $x=a^q$ and $y=a^{q^2}$ in the above expression, we obtain $x^{2^k}y^{2^k}+1+x^{2^k-1}+x^{2^k}y+y+x^{2^k-1}y^{2^k}=0$. We now add $x^{2^k}$ on both sides so as to get $x^{2^k-1}y^{2^k}(x+1)+y(x+1)^{2^k}+x^{2^k-1}(x+1)+(x+1)^{2^k}=0$, which is equivalent to  $x^{2^k-1}y^{2^k}+y(x+1)^{2^k-1}+x^{2^k-1}+(x+1)^{2^k-1}=0$ as $x\neq 1$, and after simplification, we obtain $x^{2^k-1}(y+1)^{2^k}+(x+1)^{2^k-1}(y+1)=0.$ Further, since $x\neq 1$ and $y\neq 1$, we obtain $\left(\frac{x(y+1)}{x+1}\right)^{2^k-1}=1.$ Also, $\left(\frac{x(y+1)}{x+1}\right)^{2^{3m}-1}=1$ as $q=2^m$. Now the last two equations together would imply that $\left(\frac{x(y+1)}{x+1}\right)^{2^{d}-1}=1$, where $d=\gcd(k, 3m)$. It is clear that $\frac{x(y+1)}{x+1}$ is a $(2^d-1)$-th root of unity, and there are two possible cases, namely, either $\frac{x(y+1)}{x+1}=1$ or else we take $\frac{x(y+1)}{x+1}=\omega$ with $\omega\neq 1$. It is straightforward to see that $d=\gcd(k, 3m)=\gcd(k, m)$ as $\nu_{3}(k)\leq \nu_{3}(m)$, and as a consequence, it follows that $\omega \in \F_{q}$ as $(2^d-1)| (q-1)$.
If $\frac{x(y+1)}{x+1}=1$, then $a^{q+q^2}=1$, which leads to a contradiction. Now, if $\frac{x(y+1)}{x+1}=\omega$, we have 
\begin{equation}\label{E19}
    a^{q+1}+\frac{\omega}{\omega+1}a+\frac{1}{\omega+1}=0.
\end{equation}
Thus, by letting $A=\frac{1}{\omega+1}\in \F_{q}$, it is easy to see that Equation~\eqref{E19} implies that $a\neq1$ is a solution to the following equation 
\begin{equation}\label{e19}
X^{q+1}+(A+1)X+A=0. 
\end{equation}
Since $A^{q+1}+A+1=A^2+A+1$ and $A^2+A+1\neq 0$ as $q\equiv 2\pmod{3}$, it follows from Lemma~\ref{L31} that Equation~\eqref{e19} has no root in $\mu_{q^2+q+1}\setminus\{1\}$, which gives a contradiction. Thus, $\det D_1\neq 0$.

It remains to show that $\det D_2 \neq 0$. Suppose on the contrary, that $\det D_2=0$, that is,
\begin{equation*}
    a^{(2^k+1)}a^{2^kq^2}+a+a^{2^k}+a^{(2^k+1)}a^{q^2}+a^{1+q^2}+a^{2^k}a^{2^kq^2}=0.
\end{equation*}
Now, by substituting $x=a$, $y=a^{q^2}$, and by adding $x^{2^k}$ on both sides, we get 
\begin{equation*}
    \left(\frac{x(y+1)}{x+1}\right)^{2^k-1}=1.
\end{equation*}
Following the similar arguments as above, we would either have $\frac{x(y+1)}{x+1}=1$ or else we take $\frac{x(y+1)}{x+1}=\omega$, where $\omega\neq 1$, $\omega^{2^d-1}=1$, and $\omega \in \F_{q}$. If $\frac{x(y+1)}{x+1}=1$, then $a^{1+q^2}=1$, which gives rise to a contradiction. If $\frac{x(y+1)}{x+1}=\omega$, we have
\begin{equation}\label{E20}
    a^{q+1}+ (A+1)a^q+A=0,
\end{equation}
where $A=\frac{1}{\omega+1}\in \F_{q}$. Since $q\equiv 2\pmod{3}$, it follows from Lemma \ref{L32} that $X^{q+1}+(A+1)X^{q}+A=0$ has no solution in $\mu_{q^2+q+1}\setminus\{1\}$, which is obviously a contradiction to Equation~\eqref{E20}. Hence, $\det D_2 \neq 0$.

\textbf{Case 2.} Let $q\equiv 1\pmod{3}$. First, we consider $f_1(X)$ is a PP over $\F_{q^3}$. If possible, suppose that $a=1$ then we get $\det D_1=0$ and $\det D_2=0$. Therefore, according to Lemma \ref{L22}, $f_1(X)$ is not a PP over $\F_{q^3}$. Further, if we assume that
 $\gcd(s, q-1)\neq 1$, then again by Lemma \ref{L22}, $f_1(X)$ is not a PP over $\F_{q^3}$. Hence, $a \neq 1$ and $\gcd(s, q-1)=1$.

 Conversely, we assume that $a\neq 1$ and $\gcd(s, q-1)=1$. We shall now show that $\det D_1\neq 0$ and $\det D_2\neq 0$. If possible, suppose that $\det D_1=0$. Following similar arguments as in the Case 1, we get Equation \eqref{E19}, which, in turn, ensures that $a\neq 1$ is a solution to Equation~\eqref{e19}. However, in view of Lemma \ref{L31}, the Equation \eqref{e19} has a solution in $\mu_{q^2+q+1}\setminus\{1\}$ only if  $A^{2}+A+1=0$, where $A=\frac{1}{\omega+1}$ with $\omega$ satisfying $\omega^{2^k-1}=1$. Note that $A^2+A+1=0$ if and only if $\omega^2+\omega+1=0$. But $\omega^2+\omega+1=0$ implies that $\omega^3=1$. Thus, we obtain $\omega^{\gcd(3, 2^k-1)}=1$. Since $2\nmid k$, we have $\gcd(3, 2^k-1)=1$. This means, $\omega=1$, which is a contradiction. Therefore, $\det D_1\neq 0$. Next, if possible, we assume that $\det D_2=0$. As in the Case 1, $\det D_2=0$ will eventually lead to the Equation \eqref{E20}, which implies that $a\in\mu_{q^2+q+1}\setminus\{1\}$ is solution to $X^{q+1}+(A+1)X^{q}+A=0$, where $A=\frac{1}{\omega+1}$ with $\omega^{2^k-1}=1$. According to Remark~\ref{rmk31}, $X^{q+1}+(A+1)X^{q}+A=0$ has a solution in $\mu_{q^2+q+1}\setminus\{1\}$ only if $A^{2}+A+1=0$. Now, similar arguments as above will give us $\omega=1$, which is a contradiction. Hence, $\det D_2\neq 0$.
\end{proof}

In the next three theorems, we investigate three classes of  PPs with necessary and sufficient conditions on the coefficients of polynomials of the type $L(X)+\Tr_{m}^{3m}(X)^{s}$ over $\F_{q^3}$, where $q=p^m$, $p$ is an odd prime, and $m$ is some positive integer. In the proofs of these theorems, we use the non-existence of solutions to certain equations in two variables over finite fields as a tool.

\begin{thm}\label{T32}
Let $q=p^m$, $p$ is an odd prime, $a\in \mu_{q^2+q+1}\ \subseteq \F_{q^3}$, and $f_2(X)=2X^{q^2}+(a+a^q)X^q+(a^{1+q^2}+a^{1+q})X+\Tr_{m}^{3m}(X)^s \in \F_{q^3}[X]$. Then
\begin{enumerate}
    \item[$(1)$] if $q\equiv 0\pmod{3}$, $f_2(X)$ is a PP if and only if $a$ is not a solution to the equations $X^{q+1}+X+1=0$ and $X^{q+1}+X^q+1=0$, and $\gcd(s, q-1)=1$,
    \item[$(2)$] if $q\equiv 1\pmod{3}$, $f_2(X)$ is a PP if and only if $a$ is not a solution to the  equations $X^{q+1}+\frac{1+\alpha}{1-\alpha}X-\frac{2}{1-\alpha}=0$, $X^{q+1}+\frac{1-\alpha}{1+\alpha}X-\frac{2}{1+\alpha}=0$, $2X^{q+1}-(1-\beta)X^q-(1+\beta)=0$ and $2X^{q+1}-(1+\beta)X^q-(1-\beta)=0$, where $\alpha, \beta \in \mathbb F_{q^3}$ such that $\alpha^2=\beta^2=-3$, and $\gcd(s, q-1)=1$,
    \item[$(3)$] if $q\equiv 2\pmod{3}$, $f_2(X)$ is a PP if and only if $a\neq 1$ and $\gcd(s, q-1)=1$.
    \end{enumerate}
    
\end{thm}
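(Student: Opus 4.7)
The plan is to cast $f_2(X)$ in the form required by Lemma~\ref{L22} with $n=3$. Setting $A_i(X) := X^{q^2} + a_i X^q + a_i^{1+q^2}X$ and choosing $a_1 = 1$, $a_2 = a$, $a_3 = a^q$, together with $m_1 = s$ and $m_2 = m_3 = u_2 = u_3 = 1$, we obtain $f_2(X) = A_1(X)^s + A_2(X) + A_3(X)$: indeed $A_1(X) = \Tr_m^{3m}(X)$, and $a_3^{1+q^2} = a^{q+q^3} = a^{q+1}$ since $a^{q^3} = a$ for $a \in \mu_{q^2+q+1}$, which matches the linear coefficient of $f_2$. By Lemma~\ref{L22}, $f_2$ is a PP over $\F_{q^3}$ if and only if $\gcd(s, q-1) = 1$ and $(\det D_1)(\det D_2) \neq 0$.

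The necessity part in each case is immediate: violating $\gcd(s, q-1) = 1$ kills the PP property through Lemma~\ref{L22}, and substituting any of the listed forbidden values of $a$ directly into the explicit expressions for $\det D_1$ and $\det D_2$ will force one of them to vanish. The substantive content is the converse. I will expand $\det D_1$ and $\det D_2$ as Laurent polynomials in $a, a^q, a^{q^2}$, reduce using $a^{q^3} = a$, and factor so that $\det D_1 = 0$ forces $a$ to satisfy an equation of the form $X^{q+1} + c_1 X + c_0 = 0$, while $\det D_2 = 0$ forces one of the form $X^{q+1} + c_1 X^q + c_0 = 0$, with coefficients $c_0, c_1 \in \F_q$ controlled by an auxiliary parameter analogous to the $\omega$ of Theorem~\ref{T31}.

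The trichotomy on $q \bmod 3$ is then the odd-characteristic counterpart of Lemmas~\ref{L31}--\ref{L32}. Given a candidate equation $X^{q+1} + c_1 X + c_0 = 0$ with a root $a \in \mu_{q^2+q+1}$, raising to the $q$th power and eliminating $a^{q^2+q+1} = 1$ yields a resultant identity whose discriminant reduces, after clearing parameters, to a quantity proportional to $-3$. When $q \equiv 1 \pmod 3$ this discriminant is a square in $\F_q$, producing the two roots $\pm\alpha$ (resp.\ $\pm\beta$) with $\alpha^2 = \beta^2 = -3$ and thus the four forbidden equations listed in case~(2). When $q \equiv 2 \pmod 3$ it is a nonsquare in $\F_q$ and remains a nonsquare in $\F_{q^3}$ since $[\F_{q^3}:\F_q] = 3$ is odd, so only the trivial obstruction $a = 1$ can occur. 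When $q \equiv 0 \pmod 3$ the discriminant vanishes and the resultant collapses to a perfect square, leaving exactly the two forbidden equations $X^{q+1}+X+1=0$ and $X^{q+1}+X^q+1=0$.

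The main obstacle I anticipate is the clean factorisation of $\det D_1$ and $\det D_2$ in odd characteristic, since the Frobenius-friendly $(x+1)^{2^k}$-type collapses used in Theorem~\ref{T31} are unavailable, and the reduction to the canonical equations must be tracked through ordinary polynomial manipulation. A secondary subtlety is verifying that any solution $a$ of a forbidden equation actually lies in $\mu_{q^2+q+1}$ rather than merely in $\F_{q^3}$; this should follow, in the spirit of the second case of Lemma~\ref{L31}, from a $\mu_{q^2+q+1}$-invariance identity on the coefficients $c_0, c_1$ of the equations in question.
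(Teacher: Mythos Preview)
Your proposal is correct and follows essentially the same route as the paper: cast $f_2$ in the Lemma~\ref{L22} form with $(a_1,a_2,a_3)=(1,a,a^q)$ and $(m_1,m_2,m_3)=(s,1,1)$, compute $\det D_1$ and $\det D_2$, substitute $(x,y)=(a^q,a^{q^2})$ for $D_1$ and $(x,y)=(a,a^q)$ for $D_2$, and solve the resulting quadratic, whose discriminant is proportional to $-3$, giving the trichotomy on $q\bmod 3$ exactly as you describe. Your anticipated ``secondary subtlety'' turns out not to be needed: the hypothesis $a\in\mu_{q^2+q+1}$ is simply used (via $a^{q+q^2}=a^{-1}$) to rewrite the quadratic relation between $x$ and $y$ as the listed $X^{q+1}$-type equations in $a$, and no separate verification that solutions land in $\mu_{q^2+q+1}$ is required.
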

\begin{proof}
    First, we transform $f_2(X)$ into the form specified in Lemma~\ref{L22}. Then, the determinants $\det D_1$ and $\det D_2$ corresponding to $f_2(X)$ are
    \begin{equation*}
\det D_1=1-a^q+a^{q^2}-a^{2q^2+q}+a^{q+q^2}-a^{q^2+1},
\end{equation*}
and
\begin{equation*}
    \det D_2=a^{q+2}-a+a^{q}-1+a^{1+q^2}-a^{q+1}.
\end{equation*}

\textbf{Case 1.} Suppose that $q\equiv 0\pmod{3}$. First, we assume that $f_2(X)$ is a PP over $\F_{q^3}.$ If possible, let $\gcd(s, q-1)\neq 1$, then by Lemma \ref{L22}, $f_2(X)$ is not a PP over $\F_{q^3}.$ Now, suppose that $a$ is a solution to either $X^{q+1}+X+1=0$ or $X^{q+1}+X^q+1=0$.
First, assume that $a$ is a solution to $X^{q+1}+X+1=0$, that is, $a^{q+1}=-a-1$, which implies $a^{q+q^2}=-a^q-1$. Using this, the expression for the $\det D_1$ is given by
\begin{align*}
    \det D_1=&1-a^q+a^{q^2}-a^{2q^2+q}+a^{q+q^2}-a^{q^2+1}\\
    =&\frac{a^q-a^{2q}+a^{q+q^2}-a^{2q^2+2q}+a^{2q+q^2}-1}{a^q}\\
    =&\frac{a^q-a^2q-a^q-1+(a^q+1)^2+a^q(-a^q-1)-1}{a^q}.
\end{align*}
After simplifying and using $q\equiv 0\pmod{3}$, we get $\det D_1=0$. So by Lemma \ref{L22}, $f_2(X)$ is not a PP over $\F_{q^3}.$ 

Now, we suppose that $a$ is a solution to $X^{q+1}+X^q+1=0$. Then $a^{q+1}=-a^q-1$. Using this, $\det D_2$ can written as follows
 \begin{align*}
     \det D_2&=a^{q+2}-a+a^{q}-1+a^{1+q^2}-a^{q+1}\\
     &=\frac{ a^{2q+2}-a^{q+1}+a^{2q}-a^{q}+1-a^{2q+1}}{a^q}\\
    & =\frac{a^{2q}+1-a^q+a^q+1+a^{2q}-a^q+1+a^{2q}+a^{q}}{a^q}.
 \end{align*}
Since $q\equiv 0\pmod{3}$, so we get $\det D_2=0$. Again by Lemma \ref{L22}, $f_2(X)$ is not a PP over $\F_{q^3}$, which is a contradiction.

Conversely, suppose that $a$ is not a solution to any of the equations $X^{q+1}+X+1=0$, $X^{q+1}+X^q+1=0$, and $\gcd(s,q-1)=1$. By Lemma \ref{L22}, it is enough to show that $\det D_1\neq 0$ and $\det D_2\neq 0.$ On the contrary, we assume that $\det D_1=0$, that is, $1-a^q+a^{q^2}-a^{2q^2+q}+a^{q+q^2}-a^{q^2+1}=0$, which is equivalent to $a^{q}-a^{2q}+a^{q+q^2}-a^{2q+2q^2}+a^{2q+q^2}-1=0.$ After substituting $x=a^q$ and $y=a^{q^2}$, one arrives at $$y=\frac{\left(\frac{1}{x}+1\right)\pm\sqrt{-3}\left(\frac{1}{x}-1\right)}{2}.$$ 
Since $q\equiv 0\pmod{3}$, so we obtain $2xy=1+x$, where  $x=a^q$ and $y=a^{q^2}$. This implies that $a$ is a solution of $X^{q+1}+X+1=0$, which is a contradiction. Hence $\det D_1\neq0.$ 

Next, if possible, suppose that $\det D_2=0$, that is, $a^{q+2}-a+a^{q}-1+a^{1+q^2}-a^{q+1}=0$, which is equivalent to $a^{2q+2}-a^{q+1}+a^{2q}-a^{q}+1-a^{2q+1}=0.$ By replacing $a$ with $x$ and $a^q$ with $y$, we will get $$x=\frac{\left(\frac{1}{y}+1\right)\pm\sqrt{-3}\left(\frac{1}{y}-1\right)}{2}.$$ As $q\equiv 0\pmod{3}$, we get 2xy=1+y. It yields $a^{q+1}+a^{q}+1=0$, which is not true. Therefore, $\det D_2\neq0$.

\textbf{Case 2.} Let $q\equiv 1\pmod{3}$. Suppose that $f_2(X)$ is a PP over $\F_{q^3}.$ If possible, let $\gcd(s, q-1)\neq 1$, then by Lemma \ref{L22}, $f_2(X)$ is not a PP over $\F_{q^3}.$ Now, consider  $a$ is a solution to any of the equations $X^{q+1}+\frac{1+\alpha}{1-\alpha}X-\frac{2}{1-\alpha}=0$, $X^{q+1}+\frac{1-\alpha}{1+\alpha}X-\frac{2}{1+\alpha}=0$, $2X^{q+1}-(1-\beta)X^q-(1+\beta)=0$, or $2X^{q+1}-(1+\beta)X^q-(1-\beta)=0$, where $\alpha$, $\beta \in \F_{q^3}$ with $\alpha^2=-3=\beta^2$.

Firstly, we consider $a$ is a solution of the equations $X^{q+1}+\frac{1+\alpha}{1-\alpha}X-\frac{2}{1-\alpha}=0$ or $X^{q+1}+\frac{1-\alpha}{1+\alpha}X-\frac{2}{1+\alpha}=0$, that is, $a^q=\frac{2}{(1-\alpha)a}-\frac{(1+\alpha)}{1-\alpha}$ or $a^q=\frac{2}{(1+\alpha)a}-\frac{(1-\alpha)}{1+\alpha}$. Using $a^q=\frac{2}{(1-\alpha)a}-\frac{(1+\alpha)}{1-\alpha}$ or $a^q=\frac{2}{(1+\alpha)a}-\frac{(1-\alpha)}{1+\alpha}$ in the following expression
\begin{equation*}
    \det D_1=1-a^q+a^{q^2}-a^{2q^2+q}+a^{q+q^2}-a^{q^2+1}=\frac{a^{q}-a^{2q}+\frac{1}{a}-\frac{1}{a^2}+\frac{a^q}{a}-1}{a^q},
\end{equation*}  
 we get $\det D_1=0$.

Secondly, by taking $a$ to be a solution of $2X^{q+1}-(1-\beta)X^q-(1+\beta)=0$ or $2X^{q+1}-(1+\beta)X^q-(1-\beta)=0$, we get $a^{q+1}=\frac{(1-\alpha)a^q+(1+\alpha)}{2}$ or $a^{q+1}=\frac{(1+\alpha)a^q+(1-\alpha)}{2}$, respectively.  Now, using these, in the following expression
    \begin{align*}
    \det D_2 &=a^{q+2}-a+a^{q}-1+a^{1+q^2}-a^{q+1}=\frac{a^{2q+2}-a^{q+1}+a^{2q}-a^{q}+1-a^{2q+1}}{a^q},
    \end{align*}
   we get $\det D_2=0$.
  Thus, by Lemma~\ref{L22}, $f_2 (X)$ is not a PP over $\mathbb F_{q^3}$, which is a contradiction.  
  
 Conversely, suppose that $\gcd(s, q-1)=1$ and $a$ is not a solution to any of the equations $X^{q+1}+\frac{1+\alpha}{1-\alpha}X-\frac{2}{1-\alpha}=0$, $X^{q+1}+\frac{1-\alpha}{1+\alpha}X-\frac{2}{1+\alpha}=0$, $2X^{q+1}-(1-\beta)X^q-(1+\beta)=0$, $2X^{q+1}-(1+\beta)X^q-(1-\beta)=0$, where $\alpha$, $\beta \in \F_{q^3}$ with $\alpha^2=-3=\beta^2$. By Lemma \ref{L22}, it is enough to show that $\det D_1\neq 0$ and $\det D_2\neq 0.$ 

Now, if possible, we assume that $\det D_1=0$. Then by following similar arguments as in Case 1, we get 
\begin{equation*}
    y=\frac{\left(\frac{1}{x}+1\right)\pm\sqrt{-3}\left(\frac{1}{x}-1\right)}{2}.
\end{equation*}
Since $q\equiv 1\pmod{3}$, so there exist some $\alpha\in \F_{q^3}$ such that $\alpha^2=-3$. Using this $\alpha$ in the above equation, we have $2xy=x+1+\alpha-\alpha x$ or $2xy=x+1-\alpha+\alpha x$, where $x=a^q$ and $y=a^{q^2}$. Now, it follows that $a$ is a solution to $X^{q+1}+\frac{1+\alpha}{1-\alpha}X-\frac{2}{1-\alpha}=0$ or $X^{q+1}+\frac{1-\alpha}{1+\alpha}X-\frac{2}{1+\alpha}=0$, which is a contradiction. 

Next, if possible, we consider that $\det D_2=0$. Then, we have
\begin{equation*}
    x=\frac{\left(\frac{1}{y}+1\right)\pm\sqrt{-3}\left(\frac{1}{y}-1\right)}{2},
\end{equation*}
where $x=a$ and $y=a^q$. Thus, we get $2xy=1+y+\beta-\beta y$ or $2xy=1+y-\beta+\beta y$ for some $\beta\in \F_{q^3}$ such that $\beta^2=-3$. This will imply that $a$ is a solution to $2X^{q+1}-(1-\beta)X^q-(1+\beta)=0$ or $2X^{q+1}-(1+\beta)X^q-(1-\beta)=0$, which is a contradiction. Hence, $\det D_1 \neq 0$ and $\det D_2 \neq 0$. 

\textbf{Case 3.} Let $q\equiv 2\pmod{3}$. We first consider $f_2(X)$ is a PP over $\F_{q^3}$. If possible, suppose that $a=1$ then we get $\det D_1=0$ and $\det D_2=0$. Therefore, according to Lemma~\ref{L22}, $f_2(X)$ is not a PP over $\F_{q^3}$. Now, if
 $\gcd(s, q-1)\neq 1$, then again by Lemma~\ref{L22}, $f_2(X)$ is not a PP over $\F_{q^3}$. Hence, $a\neq 1$ and $\gcd(s, q-1)=1.$ 

 Conversely, suppose that $a\neq 1$ and $\gcd(s, q-1)=1$. By Lemma \ref{L22}, it is sufficient to show that $\det D_1\neq 0$ and $\det D_2\neq 0$. If possible, let us assume that $\det D_1=0$, Then by the similar arguments as in Case 1, we get 
\begin{equation*}
    y=\frac{\left(\frac{1}{x}+1\right)\pm\sqrt{-3}\left(\frac{1}{x}-1\right)}{2}.
\end{equation*} 
Since $x\neq1$ and $-3$ has no square root in $\F_{q^3}$ as $q\equiv 2\pmod{3}$. This implies $y=a^{q^2}\notin \F_{q^3}$, which is a contradiction. Hence $\det D_1\neq 0.$

Next, if possible, suppose that $\det D_2=0$, we will get $$x=\frac{\left(\frac{1}{y}+1\right)\pm\sqrt{-3}\left(\frac{1}{y}-1\right)}{2}.$$ Now, by similar arguments as before, we obtain that $x=a \notin \mathbb F_{q^3}$, a contradiction. Thus,  $\det D_2 \neq 0$.
 \end{proof}

\begin{thm}\label{T33}
Let $q=p^m$, $p$ is an odd prime, $a\in \mu_{q^2+q+1} \subseteq \F_{q^3}$, and $f_3(X)=2X^{q^2}+(a+a^{2+q})X^q+(a^{1+q^2}+a^{2+q^2})X+\Tr_{m}^{3m}(X)^s \in \F_{q^3}[X]$. Then
\begin{enumerate}
    \item [$(1)$] if $q\equiv 0\pmod{3}$, $f_3(X)$ is a PP if and only if $a$ is not a solution to the equations $X^{q+1}+X^q+1=0$ and $X^{q+1}+X+1=0$, and $\gcd(s, q-1)=1$,
    
    \item [$(2)$]if $q\equiv 1\pmod{3}$, $f_3(X)$ is a PP if and only if $a$ is not a solution to the  equations $X^{q+1}-\frac{2}{1+\alpha}X^q+\frac{1-\alpha}{1+\alpha}=0$, $X^{q+1}-\frac{2}{1-\alpha}X^q+\frac{1+\alpha}{1-\alpha}=0$, $X^{q+1}-\frac{2}{1+\beta}X+\frac{1-\beta}{1+\beta}=0$ and $X^{q+1}-\frac{2}{1-\beta}X+\frac{1+\beta}{1-\beta}=0$,  where $\alpha$, $\beta \in \F_{q^3}$ with $\alpha^2=-3=\beta^2$, and $\gcd(s, q-1)=1$,
      \item [$(3)$] if $q\equiv 2\pmod{3}$, $f_3(X)$ is a PP if and only if $a\neq 1$ and $\gcd(s, q-1)=1$.
\end{enumerate} 
   \end{thm}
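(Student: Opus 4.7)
The plan is to mirror the structure of the proof of Theorem~\ref{T32} essentially step for step, since $f_3(X)$ has the same shape (a $q$-linearized part plus the trace to the $s$-th power). First I would rewrite $f_3(X)$ in the form $A_1(X)+u_2A_2(X)^{m_2}+u_3A_3(X)^{m_3}$ required by Lemma~\ref{L22}, with $n=3$. Because $\Tr_m^{3m}(X)^s$ contributes the factor $s$ to the product $m_1m_2m_3$ while the other two components are linear in $X$, the coprimality part of Lemma~\ref{L22} gives precisely the condition $\gcd(s,q-1)=1$. The rest of the argument then reduces to deciding when $\det D_1\neq 0$ and $\det D_2\neq 0$.

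Next I would expand the two $3\times 3$ determinants corresponding to $f_3(X)$. A direct computation (identical in flavour to that in Theorem~\ref{T32} but with the new coefficients $a+a^{2+q}$ and $a^{1+q^2}+a^{2+q^2}$) should yield compact expressions of the shape
\begin{equation*}
\det D_1 = 1 - a^{2+q} + a^{q^2} - a^{2+q+q^2}\cdot(\text{something}) + \cdots,\qquad \det D_2 = (\text{analogous in } a,a^q).
\end{equation*}
After multiplying by a suitable power of $a$ and substituting $x=a^q,\,y=a^{q^2}$ (for $\det D_1$) and $x=a,\,y=a^q$ (for $\det D_2$), the equations $\det D_i=0$ should rearrange to quadratics in $y$ (respectively $x$) whose solution formula involves $\sqrt{-3}$, exactly as in the proof of Theorem~\ref{T32}. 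The specific excluded equations in the three cases of the statement can then be read off by back-substitution, so the necessity part of each case is obtained by directly verifying that if $a$ satisfies one of the listed equations then the corresponding determinant vanishes, and the sufficiency part by contradiction.

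For the three cases, I would split on the value of $q\pmod 3$:

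\textbf{Case $q\equiv 0\pmod 3$:} In characteristic $3$ the $\sqrt{-3}$ terms collapse, so the quadratic becomes an affine relation between $x$ and $y$. Substituting back $x=a^q$, $y=a^{q^2}$ and raising to appropriate powers of $q$ (using $a^{q^2+q+1}=1$) should produce one of the two equations $X^{q+1}+X^q+1=0$ or $X^{q+1}+X+1=0$, giving the stated necessary and sufficient condition.

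\textbf{Case $q\equiv 1\pmod 3$:} Here $-3$ is a square in $\F_q\subseteq\F_{q^3}$, so we can pick $\alpha,\beta\in\F_{q^3}$ with $\alpha^2=\beta^2=-3$. The two signs in the quadratic formula give two equations for $\det D_1=0$ and two for $\det D_2=0$; rewriting these in terms of $a$ (using $a\in\mu_{q^2+q+1}$ to eliminate $a^{q^2}$ via $a^{q^2}=a^{-1-q}$) should produce exactly the four listed equations.

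\textbf{Case $q\equiv 2\pmod 3$:} Now $-3$ is not a square in $\F_{q^3}$, since its square root would lie in $\F_{q^2}$ but $\F_{q^2}\cap\F_{q^3}=\F_q$, and $-3$ is not a square in $\F_q$. Hence the quadratics have no solution in $\F_{q^3}$ except possibly at the degenerate value $x=1$ (respectively $y=1$), which corresponds to $a=1$; excluding this forces $\det D_1\neq 0$ and $\det D_2\neq 0$.

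The main obstacle I anticipate is purely bookkeeping: correctly simplifying the two determinants to the symmetric form $a^q-a^{2q}+a^{q+q^2}-a^{2q+2q^2}+a^{2q+q^2}-1$ (and its analogue for $\det D_2$), because the extra exponent shifts coming from $a^{2+q}$ and $a^{2+q^2}$ produce several cancellations that must be tracked carefully using the relation $a^{q^2+q+1}=1$. Once the determinants are in this normalized form, the remaining analysis is essentially identical to Theorem~\ref{T32}, and the statements in the three cases follow by matching the derived equations with those listed.
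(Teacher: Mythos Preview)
Your plan is essentially identical to the paper's own proof: the paper too rewrites $f_3$ in the form of Lemma~\ref{L22}, reduces everything to $\gcd(s,q-1)=1$ together with $\det D_1\neq 0$ and $\det D_2\neq 0$, substitutes $x=a^q,\,y=a^{q^2}$ (and $x=a,\,y=a^{q^2}$ for $\det D_2$), solves the resulting quadratic via $\sqrt{-3}$, and splits on $q\bmod 3$ exactly as you describe. One caution on your bookkeeping remark: the normalized expression $a^q-a^{2q}+a^{q+q^2}-a^{2q+2q^2}+a^{2q+q^2}-1$ you quote is the $\det D_1$ from Theorem~\ref{T32}, not from $f_3$; for $f_3$ the paper gets $\det D_1=a^{2q+2q^2}-a^q+a^{2q+q^2}-a^{3q+2q^2}+a^{q+q^2}-a^{q+2q^2}$ (and an analogous $\det D_2$), so be careful not to import the wrong exponents when you carry out the computation.
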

\begin{proof}
    First, we convert $f_3(X)$ into the form provided in Lemma~\ref{L22}. Then the determinants $\det D_1$ and $\det D_2$ corresponding to $f_3(X)$ are given by
    \begin{equation*}
\det D_1=a^{2q+2q^2}-a^q+a^{2q+q^2}-a^{3q+2q^2}+a^{q+q^2}-a^{q+2q^2},
\end{equation*}
and
\begin{equation*}
    \det D_2=a^{3+q^2}-a+a^{2+q}-a^2+a^{1+q^2}-a^{2+q^2}.
\end{equation*}

\textbf{Case 1.} Let $q\equiv 0\pmod{3}$. Suppose that $f_3(X)$ is a PP over $\F_{q^3}.$ If possible, let $\gcd(s, q-1)\neq 1$, then by Lemma \ref{L22}, $f_3(X)$ is not a PP over $\F_{q^3}.$ Now, consider $a$ is a solution of the equations $X^{q+1}+X^q+1=0$ or $X^{q+1}+X+1=0$. If $a$ is a solution of  $X^{q+1}+X^q+1=0$, then  $a^{q+1}=-a^q-1$. Now using $a^{q+1}=-a^q-1$ and $q\equiv 0\pmod{3}$, in the last expression of the following equation
\begin{equation*}
    \det D_1=a^{2q+2q^2}-a^q+a^{2q+q^2}-a^{3q+2q^2}+a^{q+q^2}-a^{q+2q^2}=\frac{a^q-a^{2q+2}+a^{2q+1}-a^{2q}+a^{q+1}-1}{a^{q+2}},
\end{equation*}
we get $\det D_1=0$. 

If $a$ is a solution to $X^{q+1}+X+1=0$, we have $a^{q+1}=-a-1$. Further using $a^{q+1}=-a-1$ and $q\equiv 0\pmod{3}$ in the following expression of $\det D_2$ 
\begin{equation*}
     \det D_2=a^{3+q^2}-a+a^{2+q}-a^2+a^{1+q^2}-a^{2+q^2}=\frac{ a^3-a^{q+2}+a^{2q+3}-a^{q+3}+a-a^{2}}{a^{q+1}},
 \end{equation*}
 we obtain $\det D_2=0$. Therefore, according to Lemma~\ref{L22}, $f_3(X)$ is not a PP over $\F_{q^3}$, which is a contradiction.

 Conversely, suppose that $\gcd(s, q-1)=1$ and $a$ is not a solution of any of the  equations $X^{q+1}+X^q+1=0$ and $X^{q+1}+X+1=0$. So by Lemma \ref{L22}, it is enough to show $\det D_1\neq 0$ and $\det D_2\neq 0.$ On the contrary, suppose that $\det D_1=0$, we have $a^{2q+2q^2}-a^q+a^{2q+q^2}-a^{3q+2q^2}+a^{q+q^2}-a^{q+2q^2}=0$, which is same as $a^{q+2q^2}-1+a^{q+q^2}-a^{2q+2q^2}+a^{q^2}-a^{2q^2}=0.$ By substituting $x=a^q$ and $y=a^{q^2}$, we get
\begin{equation*}
    x=\frac{\left(\frac{1}{y}+1\right)\pm\sqrt{-3}\left(\frac{1}{y}-1\right)}{2}.
\end{equation*}
Since $q\equiv 0\pmod{3}$, so  we get $2xy=1+y$, where $x=a^q$ and $y=a^{q^2}$. This implies that $a$ is a solution to $X^{q+1}+X^q+1=0$, which is a contradiction. 

Next, if $\det D_2=0$, we have  $a^{3+q^2}-a+a^{2+q}-a^2+a^{1+q^2}-a^{2+q^2}=0$, which is equivalent to $a^{3+2q^2}-a^{1+q^2}+a-a^{2+q^2}+a^{1+2q^2}-a^{2+2q^2}=0.$ Now by taking $x=a$ and $y=a^{q^2}$, we arrive at
\begin{equation*}
    x=\frac{\left(\frac{1}{y}+1\right)\pm\sqrt{-3}\left(\frac{1}{y}-1\right)}{2}.
\end{equation*} 
As $q\equiv 0\pmod{3}$, so  we get $2xy=1+y$. This forces that $a$ is a solution to $X^{q+1}+X+1=0$, which is not true. Hence, $\det D_1\neq 0$ and  $\det D_2\neq 0.$ 

\textbf{Case 2.} Assume that $q\equiv 1\pmod{3}$.  First, let us suppose that $f_3(X)$ is a PP over $\F_{q^3}.$ If possible, we consider that $\gcd(s, q-1)\neq 1$, then by Lemma \ref{L22}, $f_3(X)$ is not a PP over $\F_{q^3}.$ Now, consider $a$ is a solution of any of the equations  $X^{q+1}-\frac{2}{1+\alpha}X^q+\frac{1-\alpha}{1+\alpha}=0$ or $X^{q+1}-\frac{2}{1-\alpha}X^q+\frac{1+\alpha}{1-\alpha}=0$ or $X^{q+1}-\frac{2}{1+\beta}X+\frac{1-\beta}{1+\beta}=0$ or $X^{q+1}-\frac{2}{1-\beta}X+\frac{1+\beta}{1-\beta}=0$,  where $\alpha$, $\beta \in \F_{q^3}$ with $\alpha^2=-3=\beta^2$.

If $a$ is a solution to $X^{q+1}-\frac{2}{1+\alpha}X^q+\frac{1-\alpha}{1+\alpha}=0$ or $X^{q+1}-\frac{2}{1-\alpha}X^q+\frac{1+\alpha}{1-\alpha}=0$, then we have $a^{q+1}=\frac{2}{(1+\alpha)}a^q-\frac{(1-\alpha)}{1+\alpha}$ or $a^{q+1}=\frac{2}{(1-\alpha)}a^q-\frac{(1+\alpha)}{1-\alpha}$. Using these into the later expression of the following equation
\begin{equation*}
    \det D_1=a^{2q+2q^2}-a^q+a^{2q+q^2}-a^{3q+2q^2}+a^{q+q^2}-a^{q+2q^2}=\frac{a^{q}-a^{2q+2}+a^{2q+1}-a^{2q}+a^{q+1}-1}{a^{q+2}},
\end{equation*}
we get $\det D_1=0$. 

Next, if $a$ is a solution to $X^{q+1}-\frac{2}{1+\beta}X+\frac{1-\beta}{1+\beta}=0$ or $X^{q+1}-\frac{2}{1-\beta}X+\frac{1+\beta}{1-\beta}=0$, we have $a^{q+1}=\frac{2}{(1-\alpha)}a-\frac{(1+\alpha)}{1-\alpha}$ or $a^q=\frac{2}{(1+\alpha)}a-\frac{(1-\alpha)}{1+\alpha}$. Now, the $\det D_2$ as expressed below
\begin{equation*}
    \det D_2=a^{3+q^2}-a+a^{2+q}-a^2+a^{1+q^2}-a^{2+q^2}=\frac{a^{2}-a^{q+1}+a^{2+2q}-a^{2+q}+1-a}{a^q},
\end{equation*}
can be made $0$ by using $a^{q+1}=\frac{2}{(1-\alpha)}a-\frac{(1+\alpha)}{1-\alpha}$ or $a^q=\frac{2}{(1+\alpha)}a-\frac{(1-\alpha)}{1+\alpha}$. Therefore, according to Lemma~\ref{L22}, $f_3(X)$ is not a PP over $\F_{q^3}$, which is a contradiction.

Conversely,  suppose that $\gcd(s, q-1)=1$ and $a$ is not a solution of any of the equations $X^{q+1}-\frac{2}{1+\alpha}X^q+\frac{1-\alpha}{1+\alpha}=0$, $X^{q+1}-\frac{2}{1-\alpha}X^q+\frac{1+\alpha}{1-\alpha}=0$, $X^{q+1}-\frac{2}{1+\beta}X+\frac{1-\beta}{1+\beta}=0$ and $X^{q+1}-\frac{2}{1-\beta}X+\frac{1+\beta}{1-\beta}=0$,  where $\alpha$, $\beta \in \F_{q^3}$ with $\alpha^2=-3=\beta^2$. We aim to show that $\det D_1\neq 0$ and $\det D_2\neq 0$. If possible, suppose that $\det D_1=0$, we have 
\begin{equation*}
     x=\frac{\left(\frac{1}{y}+1\right)\pm\sqrt{-3}\left(\frac{1}{y}-1\right)}{2}.
\end{equation*}
Since $q\equiv 1\pmod{3}$, so there exist some $\alpha\in \F_{q^3}$ such that $\alpha^2=-3$. Therefore from the above equation, we get $2xy=1+y+\alpha-\alpha y$ or $2xy=1+y-\alpha+\alpha y$, where $x=a^q$ and $y=a^{q^2}$. This yields that $a$ is a solution of $X^{q+1}-\frac{2}{1+\alpha}X^q+\frac{1-\alpha}{1+\alpha}=0$ or $X^{q+1}-\frac{2}{1-\alpha}X^q+\frac{1+\alpha}{1-\alpha}=0$, which is a contradiction.  

Now, if $\det D_2=0$, then we have 
\begin{equation*}
    x=\frac{\left(\frac{1}{y}+1\right)\pm\sqrt{-3}\left(\frac{1}{y}-1\right)}{2},
\end{equation*}
where $x=a$, $y=a^{q^2}$. Thus, we get $2xy=1+y+\beta-\beta y$ or $2xy=1+y-\beta+\beta y$ for some $\beta \in \F_{q^3}$ such that $\beta^2=-3$. Therefore, we obtain $a$ as a solution of  $X^{q+1}-\frac{2}{1+\beta}X+\frac{1-\beta}{1+\beta}=0$ or $X^{q+1}-\frac{2}{1-\beta}X+\frac{1+\beta}{1-\beta}=0$, which is a contradiction. Hence, $\det D_1\neq 0$ and $\det D_2\neq 0$. 

\textbf{Case 3.}  Suppose that $q\equiv 2\pmod{3}$. Assume that  $f_3(X)$ is a PP over $\F_{q^3}$. On the contrary, suppose that $a=1$ then we get $\det D_1=0$ and $\det D_2=0$. Therefore, according to Lemma~\ref{L22}, $f_3(X)$ is not a PP over $\F_{q^3}$. Now, if $\gcd(s, q-1)\neq 1$, then again by Lemma~\ref{L22}, $f_3(X)$ is not a PP over $\F_{q^3}$. Hence $a\neq 1$ and $\gcd(s, q-1)=1.$

 Conversely, suppose that $a\neq 1$ and $\gcd(s, q-1)=1$. From Lemma \ref{L22}, it is sufficient to show that $\det D_1\neq 0$ and $\det D_2\neq 0$. If possible, assume that $\det D_1=0$, we have 

\begin{equation*}
    x=\frac{\left(\frac{1}{y}+1\right)\pm\sqrt{-3}\left(\frac{1}{y}-1\right)}{2}.
\end{equation*}
Since $y\neq1$ and $-3$ has no square root in $\F_{q^3}$ as $q\equiv 2\pmod{3}$. This yields $x=a^{q}\notin \F_{q^3}$, which is a contradiction. Hence $\det D_1\neq 0.$ 

Now, let us suppose that $\det D_2=0$, we have 

\begin{equation*}
     x=\frac{\left(\frac{1}{y}+1\right)\pm\sqrt{-3}\left(\frac{1}{y}-1\right)}{2}.
\end{equation*}
By similar argument, we get $x=a \notin \mathbb F_{q^3}$, which is a contradiction. Hence, $\det D_2 \neq 0$. Thus, by Lemma \ref{L22}, $f_3(X)$ is a PP if and only if $a\neq 1$ and $\gcd(s, q-1)=1.$ This completes the proof.
\end{proof}

\begin{thm}\label{T34}
Let $q=p^m$, $p$ is an odd prime, $a\in \mu_{q^2+q+1} \subseteq \F_{q^3}$, and $f_4(X)=2X^{q^2}+(a+a^{2+q^2})X^q+(a^{1+q^2}+a^{1+2q^2})X+\Tr_{m}^{3m}(X)^s \in \F_{q^3}[X] $. Then
\begin{enumerate}
    \item [$(1)$] if $q\equiv 0\pmod{3}$, $f_4(X)$ is a PP if and only if $a$ is not a solution to the equations $X^{q+1}+X+1=0$ and $\gcd(s, q-1)=1$,
    \item [$(2)$]if $q\equiv 1\pmod{3}$, $f_4(X)$ is a PP if and only if $a$ is not a solution to any of the  equations $X^{q+1}-\frac{2}{1+\alpha}X^q+\frac{1-\alpha}{1+\alpha}=0$, $X^{q+1}-\frac{2}{1-\alpha}X^q+\frac{1+\alpha}{1-\alpha}=0$, $X^{q+1}-\frac{2}{1+\beta}X+\frac{1-\beta}{1+\beta}=0$ and $X^{q+1}-\frac{2}{1-\beta}X+\frac{1+\beta}{1-\beta}=0$, where $\alpha$, $\beta \in \F_{q^3}$ with $\alpha^2=-3=\beta^2$, and $\gcd(s, q-1)=1$,
      \item [$(3)$] if $q\equiv 2\pmod{3}$, $f_4(X)$ is a PP if and only if $a\neq 1$ and $\gcd(s, q-1)=1$.
\end{enumerate} 
   \end{thm}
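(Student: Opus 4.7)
The plan is to prove Theorem~\ref{T34} along exactly the same lines as Theorems~\ref{T32} and~\ref{T33}, by writing $f_4(X)$ in the canonical form of Lemma~\ref{L22} and then analyzing the two associated determinants. Explicitly, I would write $f_4(X)$ as $A_1^{m_1}(X)+u_2 A_2^{m_2}(X)+u_3 A_3^{m_3}(X)$ with $n=3$, $m_1=1$ (coming from the $X^{q^2}$ block), $m_2=1$, $m_3=s$, and with the parameters $a_1, a_2, a_3$ and $u_2, u_3$ read off from the coefficients so that $a+a^{2+q^2}$ and $a^{1+q^2}+a^{1+2q^2}$ appear correctly, while the trace block $\Tr_m^{3m}(X)^s=(X+X^q+X^{q^2})^s$ corresponds to $a_3=1$ with $u_3=1$ and $m_3=s$. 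A direct expansion then gives
\begin{equation*}
\det D_1 = a^{2q+2q^{2}}-a^{q}+a^{q+2q^{2}}-a^{3q+2q^{2}}+a^{q+q^{2}}-a^{q+2q^{2}},
\end{equation*}
and an analogous expression for $\det D_2$, after which Lemma~\ref{L22} reduces the theorem to showing $\det D_1\neq 0$ and $\det D_2\neq 0$ together with $\gcd(s,q-1)=1$.

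For the necessity direction I would substitute each forbidden equation into the above determinant expressions and verify that $\det D_1$ or $\det D_2$ vanishes. For instance, to handle the equation $X^{q+1}+X+1=0$ in Case~1, I would set $a^{q+1}=-a-1$ (which also gives $a^{q^2+q}=-a^q-1$ after raising to the $q$th power), substitute into $\det D_i$, and reduce using $q\equiv 0\pmod 3$, mirroring exactly the calculation in the proof of Theorem~\ref{T33}. The same substitution strategy handles each of the four equations in Case~2, using $\alpha^2=\beta^2=-3$ and the identity $a^{1+q+q^{2}}=1$ to exchange variables. Case~3 is the simplest: if $a=1$ then all determinants visibly vanish, and if $\gcd(s,q-1)\neq 1$ the result follows directly from Lemma~\ref{L22}.

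For the converse (sufficiency), I would assume $\det D_1=0$ (resp.\ $\det D_2=0$), substitute $x=a^{q}$, $y=a^{q^{2}}$ (resp.\ $x=a$, $y=a^{q^{2}}$), clear denominators using $xy\cdot a=1$ or $xy\cdot a^{q}=1$, and reduce the relation to a quadratic in one of the variables. Completing the square should give
\begin{equation*}
x=\frac{\bigl(\tfrac{1}{y}+1\bigr)\pm\sqrt{-3}\bigl(\tfrac{1}{y}-1\bigr)}{2},
\end{equation*}
exactly as in Theorem~\ref{T33}. In Case~1 ($q\equiv 0\pmod 3$), the $\sqrt{-3}$ collapses to $0$, so $2xy=1+y$, which rewrites as $a^{q+1}+a+1=0$, contradicting the hypothesis. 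In Case~2 ($q\equiv 1\pmod 3$), $-3$ is a square in $\F_q\subseteq\F_{q^3}$, so picking $\alpha,\beta$ with $\alpha^2=\beta^2=-3$ and clearing denominators gives the four forbidden equations of the theorem statement. In Case~3 ($q\equiv 2\pmod 3$), $-3$ is a non-square in $\F_{q^3}$ (since $3\nmid q^3-1$), so $\sqrt{-3}\notin \F_{q^3}$, forcing $x\notin \F_{q^3}$ unless $y=1$; but $y=1$ yields $a=1$, which is excluded.

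The main technical obstacle will be the bookkeeping in the determinant expansion: the exponents $2+q^{2}$ and $1+2q^{2}$ in the coefficients of $f_4(X)$ differ slightly from those in $f_3(X)$, so the cancellations in $\det D_1,\det D_2$ must be re-verified carefully, and one must check that after dividing by the appropriate power of $a$ the quadratic in $(x,y)$ really does take the same universal shape as in Theorem~\ref{T33}. Once this is confirmed, the three cases run in parallel to the previous theorem with only cosmetic changes, and the final equations match those in the statement.
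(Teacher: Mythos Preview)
Your approach is correct and is precisely what the paper does: its own proof of this theorem is the single sentence that the result follows by the same techniques as Theorems~\ref{T32} and~\ref{T33}. One cosmetic point to watch when you execute the bookkeeping is that your displayed $\det D_1$ has the term $a^{q+2q^{2}}$ repeated with opposite signs (a slip), and in Case~1 both $\det D_1=0$ and $\det D_2=0$ should collapse to the \emph{same} forbidden equation $X^{q+1}+X+1=0$, which is why the statement lists only one equation there rather than two as in Theorems~\ref{T32} and~\ref{T33}.
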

\begin{proof}

We omit the proof of this theorem, as it can be proved using the same techniques as those in Theorem~\ref{T32} and Theorem \ref{T33}.
\end{proof}

In the next two theorems, we propose two classes of permutation polynomials over arbitrary finite fields.

\begin{thm}\label{T35}
Let $q=p^m$, where $p$ is an arbitrary prime, and $a\in \mu_{q^2+q+1} \subseteq \F_{q^3}$. Then $f_5(X)=2X^{q^2}+(a+a^2)X^q+(a^{1+q^2}+a^{2(1+q^2)})X+\Tr_{m}^{3m}(X)^s \in \F_{q^3}[X]$ is a PP if and only if $a\neq 1$ and $\gcd(s, q-1)=1$.
\end{thm}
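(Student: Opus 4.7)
The plan is to apply Lemma~\ref{L22} with $n=3$, taking $a_1 = a$, $a_2 = a^2$, $a_3 = 1$, together with $u_2 = u_3 = 1$, $m_1 = m_2 = 1$, and $m_3 = s$. With these choices the associated linearized polynomials are
\begin{equation*}
A_1(X) = X^{q^2} + aX^q + a^{1+q^2}X, \quad A_2(X) = X^{q^2} + a^2 X^q + a^{2(1+q^2)}X, \quad A_3(X) = X^{q^2} + X^q + X,
\end{equation*}
so that $A_3(X) = \Tr_m^{3m}(X)$ and $f_5(X) = A_1(X) + A_2(X) + A_3^s(X)$ exactly. Since $a \in \mu_{q^2+q+1}$, both $a^2$ and $1$ also lie in $\mu_{q^2+q+1}$, so the hypothesis on the $a_i$ in Lemma~\ref{L22} is satisfied.

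Next, I would compute the two determinants. For $D_1$, setting $u = a^q$ and $v = a^{q^2}$, the matrix reads
\begin{equation*}
D_1 = \begin{pmatrix} 1 & 1 & 1 \\ u & u^2 & 1 \\ uv & u^2 v^2 & 1 \end{pmatrix}.
\end{equation*}
Expanding along the third column and recognising the identity $u^2 v^2 - u^2 v - uv^2 + v + u - 1 = (uv-1)(u-1)(v-1)$ gives $\det D_1 = u(u-1)(v-1)(uv-1)$, i.e.\
\begin{equation*}
\det D_1 = a^q\,(a^q-1)\,(a^{q^2}-1)\,(a^{q+q^2}-1).
\end{equation*}
The same factorisation (with $X=a$, $Y=a^{q^2}$) applied to
\begin{equation*}
D_2 = \begin{pmatrix} 1 & a & a^{1+q^2} \\ 1 & a^2 & a^{2(1+q^2)} \\ 1 & 1 & 1 \end{pmatrix}
\end{equation*}
yields $\det D_2 = a\,(a-1)\,(a^{q^2}-1)\,(a^{1+q^2}-1)$. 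This clean factorisation is what makes Theorem~\ref{T35} valid in arbitrary characteristic without the case analysis required for $f_2, f_3, f_4$; spotting the factorisation is the one step that requires a bit of care but is not deep.

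Finally, I would observe that each factor above vanishes only when $a=1$. Indeed, $a\neq 0$ is automatic from $a \in \mu_{q^2+q+1}$, and whenever one of $a^q$, $a^{q^2}$, $a^{1+q^2}$, $a^{q+q^2}$ equals $1$, multiplying by the remaining factors in the relation $a \cdot a^q \cdot a^{q^2} = a^{1+q+q^2} = 1$ immediately forces $a = 1$. Consequently $\det D_1 \det D_2 \neq 0$ if and only if $a \neq 1$. Since $m_1 m_2 m_3 = s$, Lemma~\ref{L22} gives: $f_5(X)$ is a PP over $\F_{q^3}$ if and only if $\gcd(s,q-1)=1$ and $a \neq 1$, as claimed. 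I expect no real obstacle beyond the determinant factorisation; the two directions of the equivalence follow directly from Lemma~\ref{L22}.
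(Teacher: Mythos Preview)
Your proof is correct and follows essentially the same route as the paper: both apply Lemma~\ref{L22} with the choices $a_1=a$, $a_2=a^2$, $a_3=1$, $u_2=u_3=1$, $m_1=m_2=1$, $m_3=s$, and both reduce the question to showing that $\det D_1$ and $\det D_2$ vanish only when $a=1$. Your treatment is in fact a little cleaner: you obtain the complete factorizations $\det D_1=a^q(a^q-1)(a^{q^2}-1)(a^{q+q^2}-1)$ and $\det D_2=a(a-1)(a^{q^2}-1)(a^{1+q^2}-1)$ in one step, whereas the paper first divides out the factor $a^q$ (resp.\ $a$), then factors out $(x-1)$ by hand, and only afterwards recognises the remaining quadratic as $(y-1)(xy-1)$. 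The underlying identity $u^2v^2-u^2v-uv^2+u+v-1=(uv-1)(u-1)(v-1)$ and the final appeal to $a^{1+q+q^2}=1$ are the same in both arguments.
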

\begin{proof}
We express $f_5(X)$ into the form specified in Lemma~\ref{L22}. Then, the determinants $\det D_1$ and $\det D_2$ corresponding to $f_5(X)$ are
    \begin{equation*}
    \det D_1=a^{3q+2q^2}-a^q+a^{2q}-a^{3q+q^2}+a^{q+q^2}-a^{2q+2q^2},
\end{equation*}
and 
\begin{equation*}
    \det D_2=a^{3+2q^2}-a+a^{2}-a^{3+q^2}+a^{1+q^2}-a^{2+2q^2}.
\end{equation*}
Assume that $f_5(X)$ is a PP over $\F_{q^3}$, then we have to show that $a\neq 1$ and $\gcd(s, q-1)=1$. If possible, suppose that $a=1$ then we get $\det D_1=0$ and $\det D_2=0$. Therefore, according to Lemma~\ref{L22}, $f_5(X)$ is not a PP over $\F_{q^3}$. Next, if
 $\gcd(s, q-1)\neq 1$, then again by Lemma~\ref{L22}, $f_5(X)$ is not a PP over $\F_{q^3}$. Hence, $a\neq 1$ and $\gcd(s, q-1)=1.$

 Conversely, suppose that $a\neq 1$ and $\gcd(s, q-1)=1$. In view of Lemma~\ref{L22}, it is enough to show that $\det D_1\neq0$ and $\det D_2\neq0$. Suppose on the contrary, that $\det D_1=0$, that is, $a^{3q+2q^2}-a^q+a^{2q}-a^{3q+q^2}+a^{q+q^2}-a^{2q+2q^2}=0$, which implies that $a^{2q+2q^2}-1+a^{q}-a^{2q+q^2}+a^{q^2}-a^{q+2q^2}=0.$ By using $x=a^q$ and $y=a^{q^2}$, we get $x^2y^2-1+x-x^2y+y-xy^2=0$, which is equivalent to $xy^2(x-1)+(x-1)-y(x-1)(x+1)=0.$ Since $x\neq 1$ as $a\neq 1$, so we have $xy^2+1-y(x+1)=0$, which implies that $(a^{q^2}-1)(a^{q+q^2}-1)=0.$ Therefore, either $a^{q^2}-1=0$ or $a^{q+q^2}-1=0$. But both of these cases imply that $a=1$, which is a contradiction. Hence $\det D_1\neq 0$.

Next, if possible, assume that $\det D_2=0$, that is, $a^{3+2q^2}-a+a^{2}-a^{3+q^2}+a^{1+q^2}-a^{2+2q^2}=0$, which gives $a^{2+2q^2}-1+a-a^{2+q^2}+a^{q^2}-a^{1+2q^2}=0.$ By substituting $x=a$ and $y=a^{q^2}$, we obtain $xy^2(x-1)+(x-1)-y(x^2-1)=0.$ As $x\neq 1$, so $xy^2+1-y(x+1)=0$, which implies that $(a^{q^2+1}-1)(a^{q^2}-1)=0.$ This would further imply that $a=1$, which is a contradiction. Hence $\det D_2\neq 0$. Therefore, by Lemma \ref{L22}, $f_5(X)$ is a PP over $\F_{q^3}$ if and only if $a\neq 1$ and $\gcd(s, q-1)=1.$
\end{proof}

\begin{thm}\label{T36}
Let $q=p^m$, where $p$ is an arbitrary prime, and $a\in \mu_{q^2+q+1} \subseteq \F_{q^3}$. Then $f_6(X)=2X^{q^2}+(a+a^{q+q^2})X^q+(a^{1+q^2}+a^q)X+\Tr_{m}^{3m}(X)^s \in \F_{q^3}[X]$ is a PP if and only if $a\neq 1$ and $\gcd(s, q-1)=1$.
\end{thm}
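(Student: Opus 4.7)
The approach I would take is to invoke Lemma~\ref{L22} with $n = 3$, following the template of Theorem~\ref{T35}. First I would recast
\[
f_6(X) = A_1^{m_1}(X) + u_2 A_2^{m_2}(X) + u_3 A_3^{m_3}(X)
\]
by choosing $a_1 = a$, $a_2 = a^{q+q^2}$, $a_3 = 1$, $u_2 = u_3 = 1$, and $m_1 = m_2 = 1$, $m_3 = s$. The only non-trivial check is that $a_2^{1+q^2} = a^q$, which follows from $a \in \mu_{q^2+q+1}$ via the congruence $(q+q^2)(1+q^2) \equiv q \pmod{q^2+q+1}$; the requirement $a_i^{1+q+q^2} = 1$ is then automatic for each $i$, since $a^{q+q^2} = a^{-1}$ also lies in $\mu_{q^2+q+1}$.

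Next I would compute $\det D_1$ and $\det D_2$ from the matrix templates in Lemma~\ref{L22}. Repeated use of $a^{q^2} = a^{-1-q}$ and $a^{q+q^2} = a^{-1}$ should collapse each determinant to a six-term Laurent polynomial in $a$ and $a^q$; after clearing a monomial denominator and setting $x = a$, $y = a^q$, I expect both to reduce to the expression $x^2 y^2 - x^2 y - x y^2 + x + y - 1$. The crux of the argument is the symmetric factorisation
\[
x^2 y^2 - x^2 y - x y^2 + x + y - 1 = (x-1)(y-1)(xy-1),
\]
which exhibits $\det D_1$ and $\det D_2$ as nonzero monomial multiples of $(a-1)(a^q-1)(a^{q+1}-1)$.

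Necessity is then immediate: substituting $a = 1$ collapses $D_1$ and $D_2$ into rank-deficient all-ones matrices, so both determinants vanish, while $\gcd(s, q-1) \neq 1$ violates Lemma~\ref{L22} outright. For the sufficiency, given $a \in \mu_{q^2+q+1} \setminus \{1\}$ and $\gcd(s, q-1) = 1$, I would argue that none of the three factors can vanish: $a - 1 \neq 0$ by hypothesis, while $a^q = 1$ or $a^{q+1} = 1$ combined with $a^{q^2+q+1} = 1$ would yield $a^{\gcd(q, q^2+q+1)} = 1$ or $a^{\gcd(q+1, q^2+q+1)} = 1$, and since both gcds equal $1$ each case forces $a = 1$, a contradiction. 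The main obstacle I anticipate is recognising the clean symmetric factorisation of the six-term polynomial above; once that is in hand, the necessity/sufficiency dichotomy and the final gcd arguments are routine.
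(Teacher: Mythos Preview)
Your proposal is correct and follows essentially the same route as the paper: both apply Lemma~\ref{L22} with the decomposition $a_1=a$, $a_2=a^{q+q^2}$, $a_3=1$, reduce $\det D_1=\det D_2$ to the same six-term expression, factor it, and argue each factor forces $a=1$. Your triple factorisation $(x-1)(y-1)(xy-1)$ is marginally cleaner than the paper's two-stage split $(1-xy)(x+y-xy-1)=0$ followed by $(x-1)(1-y)=0$, and your closing gcd argument is a tidy replacement for the paper's direct observation that $a^{q+q^2}=1$ together with $a^{q^2+q+1}=1$ gives $a=1$; but these are presentational rather than substantive differences.
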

\begin{proof}
We  write $f_6(X)$ into the form described in Lemma~\ref{L22}. Then, the determinants $\det D_1$ and $\det D_2$ corresponding to $f_6(X)$ are given by $$\det D_1=a^{q+1}-a^q+a^{q^2+1}-a^{q^2}+a^{q^2+q}-a=\det D_2.$$

Suppose that $f_6(X)$ is a PP over $\F_{q^3}$, so we have to show that $a\neq 1$ and $\gcd(s, q-1)=1$. On the contrary, we suppose that $a=1$ then we get $\det D_1=0$ and $\det D_2=0$. Therefore, according to Lemma~\ref{L22}, $f_6(X)$ is not a PP over $\F_{q^3}$. Next, if possible, we assume that $\gcd(s, q-1)\neq 1$, then again by Lemma~\ref{L22}, $f_6(X)$ is not a PP over $\F_{q^3}$. Hence, $a\neq 1$ and $\gcd(s, q-1)=1.$

 Conversely, assume that $a\neq 1$ and $\gcd(s, q-1)=1$. In view of Lemma~\ref{L22}, it is enough to show that $\det D_1\neq0$ and $\det D_2\neq0$. If possible, let $\det D_1=0=\det D_2$, that is, $a^{q+1}-a^q+a^{q^2+1}-a^{q^2}+a^{q^2+q}-a=0$, which is equivalent to $  a^{q}-a^{2q+q^2}+a^{q^2}-a^{2q^2+q}+a^{2q^2+2q}-1=0$ as $a\in \mu_{q^2+q+1}$. By replacing $a^q$ with $x$ and $a^{q^2}$ with $y$, we get $x-x^2y+y-xy^2+x^2y^2-1=0$, which implies that $(1-xy)(x+y-xy-1)=0.$ So either $1-xy=0$ or $x+y-xy-1=0$. If $1-xy=0$ then we get $a=1$, which is a contradiction. If $x+y-xy-1=0$ then we have  $x=1$ or $y=1$ which gives $a=1$, so again we get a contradiction. Hence $\det D_1=\det D_2 \neq 0$. 
\end{proof}
\section{Compositional inverses and equivalence of permutation polynomials}\label{S4}
In this section, we provide the explicit expressions for the compositional inverses of the permutation polynomials obtained in Section \ref{S3}. Moreover, we study the inequivalence of our permutation polynomials with known classes of permutation polynomials of the same form, as well as among themselves.

 \subsection{Compositional inverses of permutation polynomials}\label{S41}
 
As alluded to in the introduction, Mullen~\cite{M} proposed the problem of computing the coefficients of the compositional inverses of permutation polynomials. In block ciphers, a permutation is often used as an S-box to build the confusion layer during the encryption process and the inverse
is needed while decrypting the cipher. In general, it is not easy to obtain the explicit expression for compositional inverses of PPs, except for several well-known classes of PPs such as linearized polynomials \cite{WL} and Dickson polynomials \cite{LQW}. The reader is referred to an excellent survey by Wang~\cite{W} on the various methods and their unification for computing compositional inverses of permutation polynomials. The problem of explicitly determining the compositional inverses of permutation polynomials is rather challenging. Here, we provide the coefficients of the compositional inverses of the permutation polynomials obtained in Section~\ref{S3}.

The following proposition provides the compositional inverse for the permutation polynomial of the form given in Lemma~\ref{L22} and can be gleaned from the proof of~\cite[Theorem 3]{WY1}.

\begin{prop}\label{L41}
   The compositional inverse of the PP $f(X)=A_1^{m_1}(X)+\displaystyle\sum_{i=2}^{n}{u_iA_i^{m_i}(X)}$ over $\mathbb F_{q^n}$ is given by 
   \begin{equation*}
      f^{-1}(X)=\theta_n \left(\lambda_1(X), \lambda_2(X),\ldots,\lambda_n(X)\right)^T,
   \end{equation*}
 where $\theta_n$ is the last row of $D_2^{-1}$, $\lambda_i(X)=a^{-qs_i}\left(\eta_i(X, X^q, \ldots, X^{q^{n-1}})^T\right)^{r_i}$  such that $\eta_i$ is the $i$th row of $D_1^{-1}$, $r_i$, $s_{i}$ are positive integers with the condition that $m_ir_i\equiv 1+s_i(q-1)\pmod {q^n-1}$, and the superscript T denotes the transpose of a vector.
\end{prop}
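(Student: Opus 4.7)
The plan is to extract $X$ from $Y=f(X)$ in three successive stages: first produce the intermediate unknowns $Z_i:=A_i(X)^{m_i}$ by applying Frobenius powers to the defining equation, then invert the $m_i$-th power to recover each $A_i(X)$, and finally solve the resulting linear system in $X,X^q,\dots,X^{q^{n-1}}$ to read $X$ off the last row of $D_2^{-1}$. The entire argument is essentially a careful unfolding of the proof of Lemma~\ref{L22} in reverse.

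The key preliminary identity is
$$A_i(X)^{q^j}\;=\;a_i^{\,q+q^2+\cdots+q^j}\,A_i(X),\qquad 0\le j\le n-1,\ 1\le i\le n.$$
To establish the case $j=1$ directly, expand $A_i(X)^q$, use $X^{q^n}=X$ and $a_i^{q^n}=a_i$ in $\F_{q^n}$, and verify term by term that the coefficient of $X^{q^{n-1-\ell}}$ in $A_i(X)^q$ equals $a_i^q$ times the coefficient of $X^{q^{n-1-\ell}}$ in $A_i(X)$; the final ``wrap-around'' term collapses thanks to the hypothesis $a_i^{1+q+q^2+\cdots+q^{n-1}}=1$. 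Iterating the case $j=1$ yields the general case. An immediate corollary is $A_i(X)^{q-1}=a_i^{q}$, i.e.\ $A_i(X)$ lies in the coset $\{c\in\F_{q^n}^{*}:c^{q-1}=a_i^{q}\}$.

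Next, raise $Y=A_1^{m_1}(X)+\sum_{i=2}^{n}u_iA_i^{m_i}(X)$ to the $q^j$-th power for $j=0,1,\dots,n-1$ and substitute the identity above. Writing $Z_i:=A_i^{m_i}(X)$, the resulting $n$ equations form a linear system $D_1\,(Z_1,\dots,Z_n)^T=(Y,Y^q,\dots,Y^{q^{n-1}})^T$ whose coefficient matrix is precisely the $D_1$ of Lemma~\ref{L22}. Since $\det(D_1D_2)\neq 0$ forces $\det D_1\neq 0$, one inverts to get $Z_i=\eta_i\,(Y,Y^q,\dots,Y^{q^{n-1}})^T$, where $\eta_i$ is the $i$-th row of $D_1^{-1}$. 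To pass from $Z_i$ to $A_i(X)$, pick positive integers $r_i,s_i$ with $m_ir_i\equiv 1+s_i(q-1)\pmod{q^n-1}$; such a pair exists because $\gcd(m_i,q-1)=1$ (a consequence of $\gcd(m_1\cdots m_n,q-1)=1$). Using $Z_i^{q-1}=a_i^{m_iq}$ from the corollary above, one checks
$$\bigl(a_i^{-qs_i}Z_i^{r_i}\bigr)^{m_i}\;=\;a_i^{-m_iqs_i}\,Z_i^{\,1+s_i(q-1)}\;=\;a_i^{-m_iqs_i}\,Z_i\,a_i^{m_iqs_i}\;=\;Z_i,$$
and since the $m_i$-th power map is a bijection on the coset containing $A_i(X)$, this recovers $A_i(X)=a_i^{-qs_i}Z_i^{r_i}=\lambda_i(Y)$.

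Finally, inspecting the definition of $A_i(X)$ shows that $(A_1(X),\dots,A_n(X))^T=D_2\,(X^{q^{n-1}},X^{q^{n-2}},\dots,X)^T$. Since $\det D_2\neq 0$, we invert to obtain $(X^{q^{n-1}},\dots,X)^T=D_2^{-1}(\lambda_1(Y),\dots,\lambda_n(Y))^T$, and reading the last coordinate gives $X=\theta_n\,(\lambda_1(Y),\dots,\lambda_n(Y))^T$ with $\theta_n$ the last row of $D_2^{-1}$. The main technical obstacle is the third stage: one must confirm that the element $Z_i$ indeed lies in the correct coset of $\F_{q^n}^{*}$, so that taking the $r_i$-th power and rescaling by $a_i^{-qs_i}$ produces the \emph{specific} $m_i$-th root equal to $A_i(X)$ rather than some other root. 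The two linear-algebra stages on either side are then automatic from Lemma~\ref{L22} and the Frobenius identity established at the outset.
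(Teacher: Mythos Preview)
Your proof is correct and is essentially the argument the paper has in mind: the paper does not give its own proof but states that the proposition ``can be gleaned from the proof of \cite[Theorem 3]{WY1}'', and your three-stage reconstruction (Frobenius identity $A_i(X)^q=a_i^qA_i(X)$, inversion of $D_1$ to recover $Z_i$, the $r_i$-th power trick to extract $A_i(X)$, and inversion of $D_2$) is exactly that argument. One small remark: your direct computation $a_i^{-qs_i}Z_i^{r_i}=a_i^{-qs_i}A_i(X)^{m_ir_i}=a_i^{-qs_i}A_i(X)\cdot(A_i(X)^{q-1})^{s_i}=A_i(X)$ already settles the ``correct $m_i$-th root'' issue without needing to invoke the coset-bijection language (and it also handles the degenerate case $A_i(X)=0$ automatically), so the ``main technical obstacle'' you flag is in fact resolved by the identity you have already written down.
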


Using Proposition~\ref{L41}, we provide the explicit coefficient for the compositional inverses of the permutation polynomials obtained in Section~\ref{S3}. In fact, for each $1\leq t \leq 6$, the compositional inverse of the permutation polynomial $f_t$ is given by
\begin{equation*}
\begin{split}
f_{t}^{-1}(X)=b_{31}(a^{-qs_1}(c_{11}X+c_{12}X^q+c_{13}X^{q^2})^{r_1})+b_{32}(a^{-qs_2}(c_{21}X+c_{22}X^q+c_{23}X^{q^2})^{r_2})\\+b_{33}(a^{-qs_3}(c_{31}X+c_{32}X^q+c_{33}X^{q^2})^{r_3}),
\end{split}
\end{equation*}
where  $b_{ij}=\frac{d_{ij}}{\det(D_2)}$,  $c_{ij}=\frac{a_{ij}}{\det(D_1)}$, $1\leq i,j \leq3$, $m_1=m_2=1$ and $m_3=s.$ 

The values of $a_{ij}$'s and $d_{ij}$'s are given in Table \ref{T1}. 

\begin{table}[h]
\centering
\caption{The values of $a_{ij}$'s and $d_{ij}$'s required for the compositional inverses of PPs obtained in Section~\ref{S3}.}
\label{T1}
\begin{tabular}{|c| c| c| c |c| c| c|}
 \hline 
 t & 1  & 2 & 3 & 4 & 5  & 6\\ 
 \hline
 $a_{11}$ & $a^{2^kq}+a^{-2^k}$ & $a^{2q}-a^{-2}$ & $a^{-q}-a$ & $a^{q^2}-a^{-q}$ & $a^{q-1}-a^{q^2-1}$ & $a^{2q+1}-a^{q-1}$\\
 \hline
 $a_{12}$ & $1+a^{-2^k}$ & $a^{-2}-1$ & $a-1$ & $a^{-q}-1$ & $a^{q^2-1}-1$ & $a^{q-1}-1$\\
 \hline
  $a_{13}$ & $1+a^{2^kq}$ & $1-a^{2q}$ & $1-a^{-q}$ & $1-a^{q^2}$ & $1-a^{q-1}$ & $1-a^{2q+1}$\\
\hline
  $a_{21}$ & $a^{q}+a^{-1}$ & $a^{-1}-a^q$ & $a^{-1}-a^q$ & $a^{-1}-a^q$ & $a^{-1}-a^q$ & $a^{-1}-a^q$\\
\hline
  $a_{22}$ & $1+a^{-1}$ & $1-a^{-1}$ & $1-a^{-1}$ & $1-a^{-1}$ & $1-a^{-1}$ & $1-a^{-1}$\\
\hline
  $a_{23}$ & $1+a^{q}$ & $a^q-1$ & $a^q-1$ & $a^q-1$ & $a^{q}-1$ & $a^q-1$\\
\hline
  $a_{31}$ & $a^{q-2^k}+a^{2^k-1}$ & $a^{q-2}-a^{2q-1}$ & $a^{-q^2}-a^{q^2}$ & $1-a^{q^2-1}$ & $a^{-2}-a^{q-2}$ & $a^{2q-1}-a^{2q}$\\
\hline
 $a_{32}$ & $a^{(2^k+1)q}+a^{-1}$ & $a^{-1}-a^{-2}$ & $a^{-1}-a$ & $a^{-1}-a^{-q}$ & $a^{-1}-a^{q^2-1}$ & $a^{-1}-a^{q-1}$\\
\hline
 $a_{33}$ & $a^{2^kq}+a^q$ & $a^{2q}-a^q$ & $a^{-q}-a^q$ & $a^{q^2}-a^q $ & $a^{q-1}-a^q$ & $a^{2q+1}-a^q$\\
\hline
 $d_{31}$ & $1+a^{2^k}$ & $1-a^2$ & $1-a^{-1}$ & $1-a^q$ & $1-a^{2+q}$ & $1-a^{2+q^2}$ \\
\hline
 $d_{32}$ & $1+a$ & $a-1$ & $a-1$ & $a-1$ & $a-1$ & $a-1$\\
\hline
$d_{33}$ & $a+a^{2^k}$ & $a^2-a$ & $a^{-1}-a$ & $a^q-a$ & $a^{2+q}-a$ & $a^{2+q^2}-a$\\
 \hline
\end{tabular}
\end{table}

\subsection{Inequivalence of permutation polynomials}\label{S42}
First, we establish the inequivalence among the permutation polynomials obtained in Section~\ref{S3}, as presented in Table~\ref{T2}. Subsequently, we investigate the inequivalence of the permutation polynomials given in Table~\ref{T2} with known classes of permutation polynomials of the same form as listed in Table~\ref{T3}. Here, we consider both linear and quasi-multiplicative equivalence of PPs, which are defined below.
\begin{def1} \textup{\cite{LC}} Two polynomials $f(X)$ and $g(X)$ in $\F_{q}[X]$ are linearly equivalent if there exist two linearized permutation polynomials $A_1(X)$ and  $A_2(X)$ over $\F_{q}$ such that 
\begin{equation*}
    f(X)=A_2(g(A_1(X))).
\end{equation*}
\end{def1} 
\begin{def1}\textup{\cite{WYDM}} Two permutation polynomials $f(X)$ and $g(X)$ in $\F_{q}[X]$ are quasi-multiplicative $(QM)$ equivalent if there exist an integer $1\leq d \leq q-2$ such that $\gcd(d, q-1)=1$ and $f(X)=ag(bX^d)$ for some $a,b \in \F_q^*.$
\end{def1}

\subsubsection{Inequivalence of the PPs listed in Table~\ref{T2} among themselves}\label{S421}

In Table~\ref{T2}, the PPs $f_{t}$'s for $1 \leq t \leq 6$ are linearly inequivalent as $L_{i}'(X)=L_{j}'(X)$  but $L_{i}(X) \neq L_{j}(X)$ for $1 \leq i \neq j \leq 6$.

Now, we discuss the QM-equivalence of the PPs given in Table~\ref{T2}. 
\begin{prop}\label{P1}
 The PPs of Table~\ref{T2} are QM-inequivalent among themselves.
\begin{proof}
If possible, we suppose that there exists an integer d satisfying $1 \leq d \leq q^3-2$ and  $\gcd(d, q^3-1)=1$ such that for some  $a,b \in \F_{q^3}^*$ and distinct $i,j$, $1 \leq i \neq j \leq 6$, we have $f_{i}(X)=af_{j}(bX^d)$. Now, we consider two cases. First, we suppose that $d$ is of the form $p^k$ for some integer $k\geq 0$, where $p$ is the characteristic of the field $\F_{q^3}$. This implies that $f_{i}(X)=A_{2}(f_{j}(A_{1}(X)))$, where $A_{2}(X)=aX$ and $A_{1}(X)=bX^{p^k}$, which are linearized permutation polynomials over $\F_{q^3}$. This contradicts the linear inequivalence of $f_{t}$'s. Now, consider the other case when $d \neq p^k$ for any integer $k\geq 0$. Then, it is clear that $L_{i}(X)+\Tr_{m}^{3m}(X)^s \neq aL_{j}(bX^d)+a\Tr_{m}^{3m}(bX^d)^s$, $i \neq j$, as $L_j(X^d)$ is never a linearized polynomial in this case. Hence $f_{t}$'s are QM-inequivalent to each other.
\end{proof}
\end{prop}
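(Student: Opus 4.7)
The plan is to argue by contradiction. Assume there are distinct indices $i, j \in \{1,\ldots,6\}$, elements $a, b \in \F_{q^3}^*$, and an integer $d$ with $1 \le d \le q^3-2$ and $\gcd(d, q^3-1)=1$ such that $f_i(X) = a\, f_j(bX^d)$. The plan is to split on whether $d$ is a power of the characteristic $p$, deriving a contradiction in each case.

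In the first case, $d = p^k$ for some $k \ge 0$, I would set $A_1(X) := bX^{p^k}$ and $A_2(X) := aX$. Both are linearized polynomials over $\F_{q^3}$ and both permute $\F_{q^3}$, since $a,b \ne 0$ and Frobenius is a bijection. The QM-relation then rewrites as $f_i(X) = A_2(f_j(A_1(X)))$, which is precisely linear equivalence between $f_i$ and $f_j$. This directly contradicts the linear inequivalence of $f_1,\ldots,f_6$ already recorded immediately before the proposition (via the invariant $L_t'$, which agrees across $t$ while the linearized parts $L_t$ themselves do not).

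In the second case, $d \ne p^k$ for every $k \ge 0$, I would expand
\[
a f_j(bX^d) = a L_j(bX^d) + a\, \Tr_m^{3m}(bX^d)^s.
\]
Writing $L_j(X) = \sum_\ell \alpha_\ell X^{q^\ell}$ shows that $a L_j(bX^d)$ is supported on the exponents $\{d, dq, dq^2\} = \{d,\, d\,p^m,\, d\,p^{2m}\}$. Because $d$ is not a power of $p$, none of these is a $p$-power, so $a L_j(bX^d)$ fails to be a linearized polynomial. On the left, $f_i(X) = L_i(X) + \Tr_m^{3m}(X)^s$ has its linearized piece supported only on the $p$-power set $\{1, q, q^2\}$, while the non-linearized piece $\Tr_m^{3m}(X)^s = (X + X^q + X^{q^2})^s$ has the rigid multiplicative form whose monomial exponents are $i_0 + i_1 q + i_2 q^2$ with $i_0 + i_1 + i_2 = s > 1$. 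Matching the two sides coefficient by coefficient modulo $X^{q^3}-X$, the plan is to argue that the ``stray'' exponents $d, dq, dq^2$ coming from $aL_j(bX^d)$ cannot be absorbed either into $L_i(X)$ (which is linearized) or into the trace-power piece on the right.

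The main obstacle is making this last absorption argument rigorous, since in principle the reduction modulo $X^{q^3}-X$ could create unexpected coincidences of exponents. The key tool is $\gcd(d, q^3-1)=1$, which keeps $d, dq, dq^2$ pairwise distinct modulo $q^3-1$ and distinct from $0$, together with the fact that every monomial appearing in $\Tr_m^{3m}(bX^d)^s$ has exponent of the form $d(i_0 + i_1 q + i_2 q^2)$ with $i_0+i_1+i_2 = s$, so it cannot equal $d q^\ell$ (which would correspond to $s=1$). Hence $a L_j(bX^d)$ contributes exponents that cannot be cancelled anywhere else, yielding the desired contradiction and completing the proof.
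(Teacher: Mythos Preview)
Your proposal follows essentially the same approach as the paper's proof: argue by contradiction, split on whether $d$ is a power of the characteristic $p$, reduce Case~1 to linear equivalence via $A_1(X)=bX^{p^k}$ and $A_2(X)=aX$, and in Case~2 exploit that $L_j(bX^d)$ is not linearized. The paper dispatches Case~2 in a single sentence (``it is clear that \ldots\ as $L_j(X^d)$ is never a linearized polynomial''), whereas you attempt a more careful exponent-matching argument and honestly flag the reduction modulo $X^{q^3}-X$ as the main obstacle. One small gap in your added rigor: you rule out the exponents $d, dq, dq^2$ being absorbed into $L_i(X)$ and into $a\,\Tr_m^{3m}(bX^d)^s$, but you do not explicitly rule out their absorption into $\Tr_m^{3m}(X)^s$ on the left-hand side; the paper does not address this either, so your version is at least as complete as the original.
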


\begin{table}[h]
\centering
\caption{PPs of the form $f_{t}(X)=L_{t}(X)+L_{t}'(X)^s, ~1\leq t \leq 6$.}
\label{T2}
\begin{tabular}{c c c c c}
 \hline 
 t & $L_{t}(X)$ & $L_{t}'(X)$ & Condition on $s$ & References\\ 
 \hline
 1 & $(a+a^{2^k})X^q+(a^{1+q^2}+a^{2^{k}(1+q^2)})X$ & $\Tr_{m}^{3m}(X)$& $\gcd(s, q-1)=1$ & Theorem \ref{T31}\\
 \hline
 2 & $2X^{q^2}+(a+a^q)X^q+(a^{1+q^2}+a^{1+q})X$ & $\Tr_{m}^{3m}(X)$ & $\gcd(s, q-1)=1$ & Theorem \ref{T32}\\
 \hline
 3 & $2X^{q^2}+(a+a^{2+q})X^q+(a^{1+q^2}+a^{2+q^2})X$ & $\Tr_{m}^{3m}(X)$ & $\gcd(s, q-1)=1$ & Theorem \ref{T33}\\
 \hline
 4 & $2X^{q^2}+(a+a^{2+q^2})X^q+(a^{1+q^2}+a^{1+2q^2})X$ & $\Tr_{m}^{3m}(X)$ & $\gcd(s, q-1)=1$ & Theorem \ref{T34}\\
 \hline
 5 & $2X^{q^2}+(a+a^2)X^q+(a^{1+q^2}+a^{2(1+q^2)})X$ & $\Tr_{m}^{3m}(X)$&  $\gcd(s, q-1)=1$ & Theorem \ref{T35}\\
 \hline
 6 & $2X^{q^2}+(a+a^{q+q^2})X^q+(a^{1+q^2}+a^q)X$ & $\Tr_{m}^{3m}(X)$ &  $\gcd(s, q-1)=1$ & Theorem \ref{T36}\\
 \hline
\end{tabular}
\end{table} 

\subsubsection{Inequivalence of the PPs listed in Table~\ref{T2} with permutation binomials.}\label{S422}
We discuss the linear inequivalence of the PPs listed in Table~\ref{T2} with the permutation polynomials of the form $bX^s+aX^r$, where $r$ and $s$ are positive integers such that $\gcd(s, q-1)=1$. It is important to note that if $r\neq p^i$, where $i$ is a positive integer and $p$ is the characteristic of the field $\F_{q^3}$, then for any linearized permutation polynomials $A_{2}(X)$ and $A_{1}(X)$, the composition of maps $A_2((bX^s+aX^r) (A_1(X)))$ can never be of the form $L_{t}(X)+L_{t}'(X)^s$ as $A_{2}(a{A_{1}(X)}^r)$ is not a linearized polynomial. Thus, in the following proposition, we shall assume that $r=p^i$, where $i$ is a positive integer and $p$ is the characteristic of $\F_{q^3}.$

\begin{prop}\label{P2} The PPs listed in Table~\ref{T2} are linearly inequivalent to any permutation binomial of the form $X^s+aX^r$ where $r=p^i$, $p$ is the characteristic of $\F_{q^3}$, $i, r$ and $s$ are positive integers such that $\gcd(s,q-1)=1$.
\begin{proof} Let $f(X)=X^s+aX^r$ be a permutation binomial and $f_{t}(X)=L_{t}(X)+L_{t}'(X)^s$ be a PP of the form given in Table~\ref{T2} over $\mathbb F_{q^3}$. If possible, suppose that there exist two linearized permutation polynomials $A_{1}(X)$ and $A_{2}(X)$ over $\F_{q^3}$ such that $f_{t}(X)=A_{2}(f(A_{1}(X)))=A_{2}(A_{1}(X)^s)+A_{2}(aA_{1}(X)^r).$ This forces that $A_{2}(X)=X^{p^k}$ for some $k\geq 0$. Thus, it follows that $L_{t}(X)=(aA_{1}(X)^r)^{p^k}$ and $L_{t}'(X)=(A_{1}(X))^{p^k}.$ Since $L_{t}'(X)=\Tr_{m}^{3m}(X)$, we have $\Tr_{m}^{3m}(X)=(A_{1}(X))^{p^k}$. Now, $A_1(X)$ being a PP implies that $\Tr_{m}^{3m}(X)$ is a PP, which is certainly not true. Hence, PPs in Table~\ref{T2} are linearly inequivalent to any permutation binomial. 
\end{proof} 
\end{prop}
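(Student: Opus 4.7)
The plan is to argue by contradiction. Suppose that for some $1\le t\le 6$ the polynomial $f_t(X)=L_t(X)+\Tr_m^{3m}(X)^s$ of Table~\ref{T2} is linearly equivalent to a permutation binomial $f(X)=X^s+aX^r$ with $r=p^i$. Then there exist linearized permutation polynomials $A_1(X)$ and $A_2(X)$ over $\F_{q^3}$ with
\begin{equation*}
f_t(X)=A_2(f(A_1(X)))=A_2(A_1(X)^s)+A_2(aA_1(X)^r).
\end{equation*}

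First I would use the hypothesis $r=p^i$: since $A_1$ is $\F_p$-linear, $A_1(X)^{p^i}$ is again linearized, and therefore $A_2(aA_1(X)^r)$ is a linearized polynomial in $X$. Consequently the genuinely non-linearized summand of $f_t$, namely $\Tr_m^{3m}(X)^s$ (non-linearized because $s>1$ and $\gcd(s,q-1)=1$), must come entirely from $A_2(A_1(X)^s)$. Writing $A_2(X)=\sum_j c_j X^{p^j}$, this contribution equals $\sum_j c_j A_1(X)^{sp^j}$.

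The key step is to argue that $A_2$ collapses to a single monomial, i.e.\ $A_2(X)=X^{p^k}$ after absorbing a scalar into $A_1$. The intuition is that $\Tr_m^{3m}(X)^s$ is a pure $s$-th power of one specific linearized polynomial, whereas a multi-term $A_2$ would produce summands $A_1(X)^{sp^j}$ whose exponent supports, reduced modulo $X^{q^3}-X$, are incompatible and cannot combine into a single $s$-th power. Once monomiality is established, matching non-linear parts gives $A_1(X)^{sp^k}=\Tr_m^{3m}(X)^s$, and unique factorization in $\F_{q^3}[X]$ then yields $A_1(X)^{p^k}=\zeta\,\Tr_m^{3m}(X)$ for some $s$-th root of unity $\zeta\in\F_{q^3}$.

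The contradiction is now immediate. Since $X\mapsto X^{p^k}$ is a power of the Frobenius, hence a bijection of $\F_{q^3}$, and $A_1$ is a PP of $\F_{q^3}$, the map $x\mapsto A_1(x)^{p^k}$ is also a PP of $\F_{q^3}$; multiplication by $\zeta\in\F_{q^3}^{*}$ preserves that property, so $\Tr_m^{3m}$ would itself have to permute $\F_{q^3}$. This is absurd because $\Tr_m^{3m}\colon\F_{q^3}\to\F_q$ is $q^2$-to-one. The hardest step, in my view, is the monomiality reduction for $A_2$: rigorously ruling out multi-term $A_2$ requires carefully tracking the exponent supports of the summands $A_1(X)^{sp^j}$ modulo $q^3-1$ and showing that they cannot telescope onto the exponent support of $\Tr_m^{3m}(X)^s$.
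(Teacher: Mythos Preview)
Your approach is essentially the same as the paper's: assume linear equivalence, reduce $A_2$ to a Frobenius monomial $X^{p^k}$, deduce $\Tr_m^{3m}(X)=(A_1(X))^{p^k}$ up to a scalar, and contradict the fact that $\Tr_m^{3m}$ is not a permutation of $\F_{q^3}$. The paper's proof is in fact terser than yours: it simply asserts ``This forces that $A_2(X)=X^{p^k}$'' and proceeds directly to $L_t'(X)=(A_1(X))^{p^k}$, without the $\zeta$ or the exponent-support discussion you add; so the step you flag as hardest is precisely the one the paper leaves unargued.
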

\subsubsection{Inequivalence of the PPs listed in Table~\ref{T2} with known classes of PPs of the form $L_{i}(X)+L_{i}'(X)^s$}\label{S423}

In Subsections $4.2.1$ and $4.2.2$, it is shown that the PPs in Table~\ref{T2} are QM and linearly inequivalent to each other and linearly inequivalent to the permutation binomials, respectively. Therefore, it remains to show that the PPs constructed in Section \ref{S3} (and listed in Table~\ref{T2}) are QM and linearly inequivalent to the known classes of PPs of the form $L_{i}(X)+L_{i}'(X)^s$ over $\mathbb F_{q^3}$ to establish the novelty of our results. To the best of our knowledge, we list all the PPs of the form $L_{i}(X)+L_{i}'(X)^s$ over $\mathbb F_{q^3}$ in Table~\ref{T3} and compare them with the ones given in Table~\ref{T2}. Note that whenever $\delta$ occurs in any PP given in Table~\ref{T3}, we will set $\delta=0$ for comparison with the PPs listed in Table~\ref{T2}.

\begin{longtable}{|m{1em}| m{7em}| m{7em}| m{12.5em}| m{10em}|}

\caption{Known classes of PPs of the form $L_{i}(X)+L_{i}'(X)^s$ over $\F_{q^3}$}\label{T3}
\endlastfoot
\hline 
  i & $ L_{i}(X)$ &  $L_{i}'(X)$ & Conditions on $s, m, n$ and $\delta$ & 
      References\\ 
     
 \hline
  1 & $X^{p^k}+X$ & $X^{p^k}-X+\delta$ & $s=\frac{p^n+1}{2}$, $\delta \in \F_{p^n}$  & \cite[Theorem 2]{LHT}\\
 \hline
  2 & $X$ & $X^{p^k}-X+\delta$ & $s=\frac{p^n-1}{2}+p^k$, $\delta \in \F_{p^n}$  & \cite[Theorem 4]{LHT}\\
 \hline
 3 & $X^{3^k}+X$ & $X^{3^k}-X+\delta$ & $s=\frac{3^n-1}{2}+3^k$,  & \cite[Theorem 5]{LHT}\\
 
   &  &  &  $\Tr_{m}^{3m}(\delta)=0$, $\delta \in \F_{3^n}$ &\\
 \hline
  4 & $X^{p^k}+X$ & $X^{p^k}-X+\delta$ & $s=\frac{p^n-1}{2}+p^{2k}$, $\delta \in \F_{p^n}$  & \cite[Theorem 6]{LHT}\\
 \hline
  5 & $-X^{p^k}-X$ & $X^{p^k}-X+\delta$ & $s=\frac{p^n-1}{2}+p^{2k}$, $\delta \in \F_{p^n}$  & \cite[Theorem 7]{LHT}\\
 \hline
  6 & $\phi(X)$ & $\Tr(X)$ &   & \cite[Corollary 1.6]{TW}\\
 \hline
  7 & $\gamma^2X+(\gamma^2+\gamma)X^{2^m}$   & $(1+\gamma)X+\gamma X^{2^m}$ & $m\equiv 1 \pmod 3$,& \cite[Theorem 3.1]{VS}\\
  & $+\gamma X^{2^{2m}}$  & $+(1+\gamma)X^{2^{2m}}$ &  $\gcd(s, q-1)=1$&\\
 \hline
 8 & $\gamma^2X+(\gamma^2+\gamma)X^{2^m}$  & $(\gamma+\gamma^2) X^{2^m}+\gamma X^{2^{2m}}$ & $m\equiv 1 \pmod 3$, & \cite[Theorem 3.1]{VS}\\
  
  & $+\gamma X^{2^{2m}}$  &  & $\gcd(s, q-1)=1$&\\
 \hline 
  9 & $\gamma^2X+\gamma X^{2^m}$  & $(1+\gamma)X+(1+\gamma) X^{2^m}$ & $m\equiv 2 \pmod 3$, & \cite[Theorem 3.2]{VS}\\
 
   & $+(\gamma+\gamma^2 )X^{2^{2m}}$  & $+\gamma X^{2^{2m}}$ &  $\gcd(s, q-1)=1$&\\
 \hline 
 10 & $\gamma^2X+\gamma X^{2^m}$  & $\gamma X^{2^m}+(\gamma+\gamma^2)X^{2^{2m}}$ & $m\equiv 2 \pmod 3$, & \cite[Theorem 3.2]{VS}\\

   & $+(\gamma+\gamma^2 )X^{2^{2m}}$  & &  $\gcd(s, q-1)=1$&\\
 \hline 
 11 & $aX^{q^2}+a\gamma X^q$,  & $X^{q^2}+\gamma X^q$  &  & \cite[Theorem 2]{WY1}\\
 & $+cX$, $a,c \in \F_{q^3}$ & $+\gamma^{1+q^2}X$, $\gamma \in \F_{q^3}$ & &\\
 \hline
 12 &  $bX$,  $b\in \F_{2^m}\setminus \F_{2}$ & $X^{2^m}+X+\delta$ & ${\small s (2^m+1)\equiv 1 \pmod{2^n-1}}$, &  \cite[Proposition 1]{XFZ}\\
 & & & $\delta \in \F_{2^n}$ & \\
   \hline
 13 & $bX$, $b\in \F_{2^m}\setminus \F_{2}$   & $X^{2^m}+X+\delta$ & $s=2^{2m}+1$, $\delta \in \F_{2^{3m}}$ & \cite[Proposition 2]{XFZ}\\
 \hline
 14 &  $bX$, $b\in \F_{p^m}\setminus \{0, 1\}$ & $X^{p^m}-X+\delta$ & $(p^m-1)s \equiv 0\pmod{p^n-1}$, & \cite[Proposition 5]{XFZ}\\
   & & & $\delta \in \F_{p^n}$ & \\
   \hline
15 & $L(X)$ & $B(X)$ &   & \cite[Theorem 3.1]{YD2}\\
  \hline
  16 & $L(X)$ & $L'(X)$ & $(q^k-1)s\equiv 0\pmod{q^n-1}$  & \cite[Corollary 6.2]{YD2}\\
   &  &   &  $\gcd(n, k)>1$ &\\
 \hline
 17 & $X$ & $X^{q^k}-X+\delta$ & $(q^k-1)s\equiv 0\pmod{q^n-1}$  & \cite[Corollary 6.3]{YD2}\\
   &  &   &  $\gcd(n, k)>1$ &\\
 \hline
  18 & $X$ & $X^{2^k}+X+\delta$  & $(2^k-1)s\equiv 0\pmod{2^n-1}$, & \cite[Proposition 2]{ZZH}\\
 
  &  &   &  $\gcd(n, k)>1$, $\delta \in \F_{2^n}$  &\\
 \hline

\end{longtable}

\begin{prop}\label{P3}
The PPs listed in Table~\ref{T2} are both linearly and QM-inequivalent to the PPs given in the first five rows of Table \ref{T3}.
\end{prop}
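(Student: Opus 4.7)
My plan separates QM-equivalence from linear equivalence. For QM-equivalence $f_t(X) = af_i(bX^d)$ with $\gcd(d, q^3-1) = 1$ and $a, b \in \F_{q^3}^*$, a coefficient-matching argument analogous to that of Proposition~\ref{P1} forces $d$ to be a power of the characteristic $p$, say $d = p^e$, whereupon QM reduces to linear equivalence with $A_2(X) = aX$ and $A_1(X) = bX^{p^e}$. The bulk of the work therefore goes into ruling out $f_t = A_2 \circ f_i \circ A_1$ for arbitrary linearized permutations $A_1, A_2$ of $\F_{q^3}$.

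The central tool I would employ is the symmetric second-order difference $\phi(X, Y) := f(X+Y) - f(X) - f(Y)$. Since $L_t, L_i$ are additive and $A_1, A_2$ are linearized, a short computation yields
$$\phi_t(X, Y) = A_2\bigl(\phi_i(A_1(X), A_1(Y))\bigr).$$
On the left, $\phi_t(X, Y) = (\Tr_m^{3m}(X) + \Tr_m^{3m}(Y))^s - \Tr_m^{3m}(X)^s - \Tr_m^{3m}(Y)^s$ is a polynomial expression in $\Tr_m^{3m}(X), \Tr_m^{3m}(Y) \in \F_q$, and hence vanishes whenever $X \in \ker \Tr_m^{3m}$. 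Since $A_2$ is a bijection fixing $0$, this forces $\phi_i(A_1(X), A_1(Y)) = 0$ for every $Y \in \F_{q^3}$ whenever $X \in \ker \Tr_m^{3m}$.

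Writing $u = A_1(X)^{p^k} - A_1(X)$ and letting $v = A_1(Y)^{p^k} - A_1(Y)$ range over $V := \{Z^{p^k} - Z : Z \in \F_{q^3}\}$, I would reduce the vanishing to $(u+v)^s - u^s - v^s = 0$ for every $v \in V$. For each of the five exponents $s$ listed in rows one through five of Table~\ref{T3}, I would verify the identity $Z^s = \chi(Z)\, Z^{p^j}$ on $\F_{q^3}^*$, where $\chi$ denotes the quadratic character on $\F_{q^3}^*$ and $j \in \{0, k, 2k\}$ is determined by the row, and combine it with the Frobenius identity $(u+v)^{p^j} = u^{p^j} + v^{p^j}$ to reduce the vanishing condition to
$$(\chi(u+v) - \chi(u))\, u^{p^j} + (\chi(u+v) - \chi(v))\, v^{p^j} = 0.$$
A brief case analysis on the signs of $\chi(u), \chi(v), \chi(u+v)$ then shows that, provided $u \neq 0$, this equation fails as soon as $\chi(v) \neq \chi(u)$. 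Since $p$ is odd in all five rows and $V$ is a nontrivial $\F_p$-subspace of $\F_{q^3}$ whenever $d := \gcd(k, 3m) < 3m$, the set $V \setminus \{0\}$ contains elements of both quadratic residuosities, so a suitable $v$ exists and we obtain a contradiction unless $u = 0$; equivalently $A_1(X) \in \F_{p^d}$.

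Thus $A_1(\ker \Tr_m^{3m}) \subseteq \F_{p^d}$, giving $p^{2m} \leq p^d$, so $d \geq 2m$. Combined with $d \mid 3m$ and the arithmetic fact $2m \nmid 3m$, the only option is $d = 3m$, in which case $L_i'(X) = X^{p^k} - X$ vanishes identically on $\F_{q^3}$ and $f_i$ collapses to a linearized polynomial, contradicting the presence of the genuinely nonlinear term $\Tr_m^{3m}(X)^s$ in $f_t$. I expect the main obstacle to be the uniform verification of $Z^s = \chi(Z)\, Z^{p^j}$ row-by-row, together with careful handling of the degenerate subcases $v = 0$ and $v = -u$ (the latter involving $\chi(-1)$) and the existence of $v \in V$ with the prescribed quadratic character in the boundary case $d = 3m - 1$.
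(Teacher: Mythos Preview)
Your approach is entirely different from the paper's. The paper does not analyse second differences or quadratic characters at all; its proof is a brief family-level comparison of exponents. It observes that in rows~1--5 of Table~\ref{T3} the exponent is either the single value $s=\tfrac{p^n+1}{2}$ (row~1) or of the shape $s=\tfrac{p^n-1}{2}+\ell$ with $\ell\ge1$ (rows~2--5), whereas the Table~\ref{T2} classes allow every $s$ with $\gcd(s,q-1)=1$; in particular $s=q-2$ occurs in Table~\ref{T2} but never in rows~2--5. On that basis the paper simply declares the Table~\ref{T2} classes ``more general.'' So the paper is really making a statement about parameter ranges of the two families, not the polynomial-by-polynomial inequivalence you are pursuing.

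Most of your machinery is sound: the identities $Z^{s_i}=\chi(Z)Z^{p^j}$ hold row by row, $V$ is an $\F_p$-subspace of dimension $3m-d\ge2$ whenever $d=\gcd(k,3m)<3m$ (your worry about $d=3m-1$ is vacuous, since $(3m-1)\nmid 3m$ for integer $m\ge1$), and the sign analysis then forces $u=0$. The gap is in your final sentence. When $d=3m$ you correctly note that $f_i$ degenerates to a linearized permutation, but ``genuinely nonlinear'' is not automatic for $f_t$: if $s\equiv p^{j}\pmod{q-1}$ for some $j$ (e.g.\ $s=p$), then $\Tr_m^{3m}(X)^s=\Tr_m^{3m}(X)^{p^{j}}$ as functions on $\F_{q^3}$, so $f_t$ is itself additive and \emph{is} linearly equivalent to the linearized $f_i$. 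Thus the individual-polynomial statement you are aiming at actually fails in that corner case, which is consistent with the paper contenting itself with the weaker ``more general'' conclusion rather than pairwise inequivalence.
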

\begin{proof}
Notice that all the PPs given in the first five rows of Table \ref{T3}, which are from \cite{LHT}, are over the finite field $\F_{p^n}$, where $n$ is a positive integer. Thus, to compare them with ones in Table~\ref{T2}, we may take $n=3m$ so as to get $p^n=q^3$, where $q=p^m$. It is important to note that if $s$ is such that $\gcd(s,q-1)\neq 1$ in all the PPs from the first to the fifth rows in Table~\ref{T3}, then there is no point in comparing them with the PPs listed in Table~\ref{T2}, as the conditions on $s$ are different. However, if $\gcd(s,q-1)= 1$, we handle the first row separately from the second to fifth rows of Table~\ref{T3}.

\textbf{Case 1.} We first discuss the equivalence of the PPs given in Table~\ref{T2} with the first row of Table~\ref{T3}. We can see that the exponent $s$ in the PPs of first row of Table~\ref{T3} is fixed. However, the exponents $s$ in the PPs of Table~\ref{T2} include all integers that satisfy the condition $\gcd(s,q-1)= 1$ and are not fixed, making the PPs in Table~\ref{T2} more general.

\textbf{Case 2.} Here, we consider the PPs from the second to the fifth rows of Table~\ref{T3}, where the exponent $s$ is of the form $s=\frac{p^n-1}{2}+\ell$, with $\ell \geq 1$ and $\gcd(s,q-1)= 1$, and compare them with the PPs given in Table~\ref{T2}. Since $\gcd(q-2,q-1)=1$, so the exponent $q-2$ is included in all the PPs given in Table~\ref{T2}. However, it is evident that $q-2$ can never be expressed in the form $s=\frac{p^n-1}{2}+\ell$ for any positive integer $\ell$. Therefore, the exponent $q-2$ is not included in the PPs from the second to the fifth row of Table~\ref{T3}. Thus, the PPs given in Table~\ref{T2} are more general.
\end{proof}

 \begin{prop}\label{P4}
 The PPs in Table~\ref{T2} are both linearly and QM-inequivalent to the PPs in the sixth row of Table~\ref{T3}.
\end{prop}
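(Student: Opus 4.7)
The plan is to mirror the strategy used in Propositions~\ref{P1}, \ref{P2}, and \ref{P3}: first reduce the QM-equivalence question to the linear one, and then analyze linear equivalence through the essentially unique decomposition ``linearized part $+$ (trace-power) part''.

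For the QM reduction, suppose $f_t(X) = a f(bX^d)$ with $f(X) = \phi(X) + \Tr(X)^s$ from row $6$ of Table~\ref{T3}, where $a, b \in \F_{q^3}^*$ and $\gcd(d, q^3-1)=1$. If $d = p^k$ for some $k\ge 0$, then $bX^{p^k}$ is itself a linearized PP and the QM-equivalence collapses to a linear equivalence, which can be handled by the argument below. If $d$ is not a power of $p$, then $\phi(bX^d)$ fails to be a linearized polynomial, so $af(bX^d)$ cannot match the form $L_t(X) + \Tr_m^{3m}(X)^{s'}$ appearing in Table~\ref{T2}. Hence it suffices to rule out linear equivalence.

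For linear equivalence, assume $f_t(X) = A_2\bigl(f(A_1(X))\bigr)$ for some linearized permutation polynomials $A_1, A_2$ over $\F_{q^3}$. Using additivity of $A_2$, this reads
\begin{equation*}
L_t(X) + \Tr_m^{3m}(X)^{s'} \;=\; A_2\bigl(\phi(A_1(X))\bigr) \;+\; A_2\bigl(\Tr(A_1(X))^s\bigr).
\end{equation*}
Writing $A_2(Y) = \sum_i c_i Y^{p^i}$, the second summand on the right becomes $\sum_i c_i \Tr(A_1(X))^{sp^i}$, a sum of $p$-th power images of $\Tr(A_1(X))^s$. For this to reproduce the single trace-power term $\Tr_m^{3m}(X)^{s'}$ on the left (arguing as in the proof of Proposition~\ref{P2}), all but one $c_i$ must vanish, forcing $A_2(Y) = cY^{p^k}$ for some $c\in\F_{q^3}^*$ and $k\ge 0$. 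Consequently
\begin{equation*}
L_t(X) \;=\; c\,\phi(A_1(X))^{p^k}, \qquad \Tr_m^{3m}(X)^{s'} \;=\; c\,\Tr(A_1(X))^{sp^k}.
\end{equation*}
Since $\Tr(A_1(X))$ is $\F_p$-linear in $X$, the second identity forces $\Tr(A_1(X))$ to be a scalar multiple of $\Tr_m^{3m}(X)$, which in turn yields $s' \equiv sp^k \pmod{q-1}$ and a strong structural constraint on $A_1$.

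The final step is to combine these constraints with the first identity $L_t(X) = c\,\phi(A_1(X))^{p^k}$, plugging in the explicit form of $\phi(X)$ from \cite[Corollary 1.6]{TW} and each of the six explicit $L_t(X)$'s listed in Table~\ref{T2}, to produce the desired contradiction. The main obstacle is this last case analysis: one must verify, separately for each $t \in \{1,\ldots,6\}$, that no linearized permutation polynomial $A_1$ over $\F_{q^3}$ together with constants $c$ and $k$ can carry $\phi(X)^{p^k}$ onto the specific shape of $L_t(X)$. Particular care is required because each $L_t(X)$ depends on the free parameter $a \in \mu_{q^2+q+1}$, so the comparison must be done at the level of coefficients in $a$ and its $q$-powers, isolating those monomials in $A_1(X)$ that could match the $q$- and $q^2$-Frobenius terms prescribed by $L_t(X)$.
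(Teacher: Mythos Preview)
Your reduction of QM-equivalence to linear equivalence, and of linear equivalence to the monomial form $A_2(Y)=cY^{p^k}$, mirrors the paper's general style (cf.\ the proofs of Propositions~\ref{P1} and~\ref{P2}). The divergence comes at the end: you announce a six-fold case analysis, one for each $L_t$, in which you would pull the explicit $\phi(X)$ from \cite[Corollary 1.6]{TW} and compare coefficients against $L_t$ --- but you never carry it out. As written, the proposal is a plan, not a proof.

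The paper short-circuits this entire case analysis with a single observation you are missing: in row~6 of Table~\ref{T3} the polynomial $\phi(X)$ has all its coefficients in the \emph{small} field $\F_q$, whereas each $L_t(X)$ in Table~\ref{T2} has coefficients that are genuine elements of $\F_{q^3}\setminus\F_q$ (they involve $a\in\mu_{q^2+q+1}$ with $a\neq 1$). Since $L_t'(X)=\Tr(X)$ already matches the trace part of row~6, the paper concludes $L_t(X)\neq\phi(X)$ directly from this coefficient-field discrepancy, and then disposes of QM-equivalence by the same device as in Proposition~\ref{P1}. So the paper's proof is two sentences, not a case-by-case comparison.

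If you want your more structural route to go through, you would still need the $\F_q$-versus-$\F_{q^3}$ observation (or something equivalent) to finish: once you reach $L_t(X)=c\,\phi(A_1(X))^{p^k}$, the point is that the right-hand side is built from an $\F_q$-coefficient polynomial $\phi$, and you must argue that no choice of $c\in\F_{q^3}^*$ and $A_1$ can manufacture the specific $a$-dependent coefficients of $L_t$. Without that ingredient your outline has no mechanism to produce the contradiction.
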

\begin{proof}
We can observe that the PPs in Table~\ref{T2} are linearly inequivalent to the PPs in the sixth row of Table \ref{T3} because $L_{t}'(X)=\Tr(X)$ and $\L_{t}(X)\neq \phi(X)$ for $1\leq t \leq 6$ (as $L_{t}(X)\in \F_{q^3}[X]$ and $\phi(X)\in \F_{q}[X])$. Furthermore, using a similar argument as in the proof of Proposition \ref{P1}, we can conclude that the PPs in Table~\ref{T2} are QM-inequivalent to the PPs in the sixth row of Table \ref{T3}.
\end{proof}

\begin{prop}\label{P5}
 The PPs over fields of even characteristic in Table~\ref{T2} are both linearly and QM-inequivalent to the PPs from the seventh to the tenth rows of Table~\ref{T3}. 
\end{prop}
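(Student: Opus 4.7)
The plan mirrors the two-step strategy from Propositions~\ref{P1}--\ref{P4}. Denote by $f_t(X) = L_t(X) + \Tr_m^{3m}(X)^s$ the even-characteristic entries of Table~\ref{T2} (that is, $t \in \{1, 5, 6\}$, noting that in characteristic $2$ the leading coefficient $2$ in $L_5, L_6$ vanishes), and by $g_i(X) = L_i(X) + L_i'(X)^s$ the rows $i \in \{7, 8, 9, 10\}$ of Table~\ref{T3}, whose coefficients involve the parameter $\gamma \in \F_{q^3} \setminus \F_q$ arising from the construction of \cite{VS}.

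For linear inequivalence, I would suppose that $f_t(X) = A_2(g_i(A_1(X)))$ for some linearized permutation polynomials $A_1, A_2 \in \F_{q^3}[X]$. Expanding $A_2(Y) = \sum_k c_k Y^{2^k}$ and using additivity of $A_2$, this becomes
\begin{equation*}
L_t(X) + \Tr_m^{3m}(X)^s = A_2\bigl(L_i(A_1(X))\bigr) + \sum_{k} c_k\, \bigl(L_i'(A_1(X))^{2^k}\bigr)^{s}.
\end{equation*}
In the generic case $s \neq 2^j$, the polynomial $\Tr_m^{3m}(X)^s$ is genuinely non-linearized, and each term $(L_i'(A_1(X))^{2^k})^{s}$ on the right-hand side is an $s$-th power of a linearized polynomial with strictly larger maximal degree as $k$ grows. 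Matching the non-linearized component of both sides pushes $A_2$ to a single Frobenius monomial $A_2(Y) = c\,Y^{2^k}$, and after extracting an $s$-th root (permissible since $\gcd(s, q-1) = 1$, making the root unique as a linearized polynomial), the problem reduces to a linearized identity
\begin{equation*}
\lambda\, L_i'(A_1(X))^{2^k} = \Tr_m^{3m}(X)
\end{equation*}
for some $\lambda \in \F_{q^3}^*$. The degenerate case $s = 2^j$ makes both $f_t$ and $g_i$ linearized, and the claim reduces to a direct comparison of coefficient vectors (which disagree, since $L_t$ for $t \in \{1,5,6\}$ has no $X^{q^2}$-term in characteristic $2$, while each $L_i$ from rows $7$--$10$ does).

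The main obstacle is ruling out the displayed linearized identity. The trace $\Tr_m^{3m}(X) = X + X^q + X^{q^2}$ has the rigid coefficient vector $(1, 1, 1) \in \F_q^3$, invariant under the cyclic $q$-Frobenius shift. By contrast, each $L_i'(X)$ for $i \in \{7, 8, 9, 10\}$ has coefficients in $\F_q[\gamma]$ with $\gamma \notin \F_q$, and composition with a $p$-linearized $A_1$ followed by the Frobenius $\phi^{k}$ keeps the resulting three-coefficient vector inside the $\F_q$-subalgebra generated by $\gamma$ and the coefficients of $A_1$. Translating the identity into a system of three equations in $(\lambda, A_1, k)$ and performing a case analysis indexed by $i \in \{7, \ldots, 10\}$ and by $m \pmod{3}$ (to track whether rows $7,8$ or $9,10$ of Table~\ref{T3} apply) yields an algebraic relation forcing $\gamma \in \F_q$, contradicting the hypothesis under which $g_i$ is a permutation.

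The QM-inequivalence will follow the splitting used in Proposition~\ref{P1}: if $f_t(X) = a\, g_i(bX^d)$ with $\gcd(d, q^3-1) = 1$ and $a, b \in \F_{q^3}^*$, then either $d = 2^k$ for some $k \geq 0$, in which case the identity reduces via $A_2(X) = aX$ and $A_1(X) = bX^{2^k}$ to the linear case already excluded, or $d$ is not a power of $2$, in which case $L_i(X^d)$ is no longer a linearized polynomial and $a\, g_i(bX^d)$ cannot assume the shape $(\text{linearized}) + (\text{linearized})^s$ exhibited by $f_t(X)$.
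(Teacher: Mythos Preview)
Your approach differs substantially from the paper's. The paper dispatches this proposition in a single sentence: the PPs in rows 7--10 of Table~\ref{T3} are defined only under the restrictions $m \equiv 1 \pmod 3$ (rows 7, 8) or $m \equiv 2 \pmod 3$ (rows 9, 10), whereas the even-characteristic PPs in Table~\ref{T2} exist for every positive integer $m$, in particular for $m \equiv 0 \pmod 3$. From this the paper concludes that the Table~\ref{T2} families are ``more general'' and hence not subsumed by rows 7--10. No polynomial-by-polynomial comparison is attempted, and no structural analysis of $A_1$, $A_2$, or $\gamma$ is needed.

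Your proposal aims at the stronger statement that no individual $f_t$ is equivalent to any individual $g_i$ even for those $m$ where both are defined. That is more ambitious, but the outline has genuine gaps. First, the reduction of $A_2$ to a single Frobenius monomial $cY^{2^k}$ via a ``strictly larger maximal degree'' argument does not go through: all computations live modulo $X^{q^3}-X$, so the exponents of the various Frobenius twists $(L_i'(A_1(X)))^{s\cdot 2^k}$ wrap around and cannot be separated by degree alone. Second, the step ``after extracting an $s$-th root'' is not justified: from $c\,(L_i'(A_1(X)))^{s\cdot 2^k} = \Tr_m^{3m}(X)^s$ as functions on $\F_{q^3}$ you cannot conclude $\lambda\,L_i'(A_1(X))^{2^k} = \Tr_m^{3m}(X)$, because $\gcd(s,q-1)=1$ does not force $\gcd(s,q^3-1)=1$, so the $s$-th power map on $\F_{q^3}$ need not be injective. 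Third, the decisive case analysis that is supposed to force $\gamma \in \F_q$ is only announced, not carried out; that is precisely the heart of the argument and cannot be waved through.

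If your aim is to match the paper, the one-line observation about the range of $m$ suffices and avoids all of this.
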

\begin{proof}
The PPs over fields of even characteristic in Table~\ref{T2}  are more general than those in the rows from seventh to tenth of Table~\ref{T3} because the PPs in Table~\ref{T2}  are over the finite field $\F_{q^3}$, where $q=2^m$ and  $m$ is an arbitrary positive integer, unlike the PPs in Table \ref{T3} for which $m\equiv 1,2 \mod 3$.     
\end{proof}

\begin{prop}\label{P6}
    The PPs in Table~\ref{T2} are QM-inequivalent to the PPs in the eleventh row of Table \ref{T3}.
\end{prop}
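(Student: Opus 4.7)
The plan is to follow the two-case dichotomy already used in the proof of Proposition~\ref{P1}. Let $g(X) = aX^{q^2} + a\gamma X^q + cX + (X^{q^2} + \gamma X^q + \gamma^{1+q^2}X)^s$ denote the PP of row~$11$ of Table~\ref{T3}. A QM-equivalence $f_t(X) = \alpha g(\beta X^d)$ splits according to whether $d$ is a power of $p$. If $d \neq p^k$ for any $k \geq 0$, then $L(\beta X^d) = a\beta^{q^2}X^{dq^2} + a\gamma\beta^{q} X^{dq} + c\beta X^d$ is not $\F_p$-linearized, so $\alpha g(\beta X^d)$ cannot match the shape ``linearized polynomial $+$ (linearized polynomial)$^s$'' of $f_t$, exactly as in Proposition~\ref{P1}. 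This reduces matters to $d = p^k$, which is a linear equivalence with $A_1(X) = \beta X^{p^k}$ and $A_2(X) = \alpha X$.

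For the linear equivalence case, I would suppose $f_t(X) = A_2(g(A_1(X)))$ for $\F_p$-linearized permutation polynomials $A_1, A_2$ over $\F_{q^3}$. By $\F_p$-linearity of $A_2$,
\begin{equation*}
L_t(X) + \Tr_m^{3m}(X)^s = A_2\bigl(L(A_1(X))\bigr) + A_2\bigl(L'(A_1(X))^s\bigr),
\end{equation*}
where $L'(X) = X^{q^2} + \gamma X^q + \gamma^{1+q^2}X$. The first summand on the right is $\F_p$-linearized, hence of degree at most $q^2$ as a reduced polynomial, while both $s$-th-power terms generically produce monomials of much higher degree. Matching the high-degree monomials forces the separation $A_2(L(A_1(X))) = L_t(X)$ and $A_2(L'(A_1(X))^s) = \Tr_m^{3m}(X)^s$.

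The second equality, together with the uniqueness of $s$-th roots (using $\gcd(s,q-1)=1$) and the structure of $\F_p$-linearized polynomials, forces $A_2$ to be a Frobenius monomial $A_2(Y) = \mu Y^{p^k}$, and then $\mu' L'(A_1(X))^{p^k} = \Tr_m^{3m}(X)$ for some $\mu' \in \F_{q^3}^*$. Expanding $A_1(X) = \sum_j \beta_j X^{p^j}$ and matching the three-term structure of $\Tr_m^{3m}$ reduces $A_1$ to a Frobenius monomial $\beta X^{p^j}$ and pins $\gamma$ to be a $(q^2-q)$-th power in $\F_{q^3}$. Plugging these into the first equation $A_2(L(A_1(X))) = L_t(X)$ yields a rigid relation on the coefficients of $L_t$, which a case-by-case comparison against Table~\ref{T2} rules out: in $L_1$ the $X^{q^2}$-coefficient vanishes in even characteristic, while in $L_2, \ldots, L_6$ the $X^{q^2}$-coefficient is the rigid scalar $2$, incompatible with the flexible coefficient $\alpha a\beta^{q^2 p^k}$ on the row~$11$ side (one would either have $a = 0$, which then forces further contradictions on the $X^q$- and $X$-coefficients, or $a \neq 0$, which then fails to produce the scalar $2$).

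The main obstacle is the separation step: since $A_2$ could in principle mix the $\F_p$-linear and the $s$-th-power parts, one must carefully bound the monomial degrees in $A_2(L'(A_1(X))^s)$ away from the linearized regime $\{1,q,q^2\}$. Once separation is secure, the remaining steps reduce to explicit coefficient comparisons using the forms of $L_t$ in Table~\ref{T2}, and the rest of the proof reads off.
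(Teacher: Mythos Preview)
Your argument has two related problems, one of overcomplication and one genuine gap.

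First, the overcomplication: once you have reduced to $d=p^{k}$, QM-equivalence already hands you $A_{2}(X)=\alpha X$ and $A_{1}(X)=\beta X^{p^{k}}$. You then abandon this and restart with \emph{arbitrary} $\F_p$-linearized permutations $A_{1},A_{2}$, which is why the ``separation step'' appears as an obstacle and why you must later argue that $A_{2}$ is a Frobenius monomial. With $A_{2}=\alpha\cdot$ a scalar from the outset, $\alpha g(\beta X^{p^{k}})=\alpha g_{1}(\beta X^{p^{k}})+\alpha\,g_{2}(\beta X^{p^{k}})^{s}$ splits for free, and the obstacle disappears.

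Second, and more seriously, your final contradiction does not go through. You claim that for $t\ge 2$ the $X^{q^{2}}$-coefficient of $L_{t}$, namely the scalar $2$, is ``incompatible with the flexible coefficient $\alpha a\beta^{q^{2}p^{k}}$'' on the row-$11$ side. But $a,c,\gamma$ in row~$11$ and $\alpha,\beta$ in the QM relation are all free parameters, so a \emph{single} coefficient can be matched to anything, including $2$ (or $0$, for $t=1$). Looking at one coefficient of $L_{t}$ cannot produce a contradiction.

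What the paper exploits is a relation \emph{between two} coefficients. Writing $g_{1}(X)=aX^{q^{2}}+a\gamma X^{q}+cX$ and $g_{2}(X)=X^{q^{2}}+\gamma X^{q}+\gamma^{1+q^{2}}X$, note that the $X^{q^{2}}$- and $X^{q}$-coefficients of $g_{1}$ are exactly $a$ times those of $g_{2}$. After forcing $d=1$ and matching $\beta g_{2}(\alpha X)$ with $\Tr_{m}^{3m}(X)$, one obtains $\beta\alpha^{q^{2}}=1$ and $\beta\alpha^{q}\gamma=1$; hence in $\beta g_{1}(\alpha X)$ the $X^{q^{2}}$- and $X^{q}$-coefficients are \emph{both equal to $a$}. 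Thus any $L_{t}$ arising this way must have its $X^{q^{2}}$- and $X^{q}$-coefficients equal, which fails for every row of Table~\ref{T2}. This coupling between $g_{1}$ and $g_{2}$ is the key idea missing from your proposal.
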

\begin{proof}
Suppose that $g(X)=g_{1}(X)+g_{2}(X)^s$ is  a PP in the eleventh row of Table~\ref{T3}, where $g_1(X)=aX^{q^2}+a\gamma X^q+cX$ and $g_{2}(X)=X^{q^2}+\gamma X^q+\gamma^{1+q^2}X.$ If possible, assume that $f_{t}(X)=\beta g(\alpha X^d)$ for some $\alpha, \beta \in \F_{q^3}^*$,  where $\gcd(d, q^3-1)=1$ and for $1\leq t \leq 6$, $f_t(X)$ is a PP from Table~\ref{T2}. By comparing the exponents on both the sides, we have $d=1$. Further, by comparing the coefficients of $\beta g_2(\alpha X)$ and $\Tr(X)$, we have $\beta \alpha^{q^2}=1$, $\beta \alpha ^q \gamma=1$, and $\beta \alpha \gamma^{1+q^2}=1$, implying that the coefficients of $X^{q^2}$ and $X^q$ in  $\beta g_{1}(\alpha X)$ are same. Consequently, the coefficients of $X^{q^2}$ and $X^q$ in $L_t(X)$, for $1\leq t \leq 6$, are same, which is obviously a contradiction. Therefore, $f_{t}(X)\neq \beta g(\alpha X)$ for any $\alpha, \beta \in \F_{q^3}^*.$
\end{proof}

\begin{prop}\label{P7}
 The PPs listed in Table~\ref{T2} are both linearly and QM-inequivalent to the PPs from the twelfth to fourteenth rows of Table \ref{T3}.
    \end{prop}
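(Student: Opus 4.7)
The plan is to establish both QM-inequivalence and linear inequivalence separately, following the strategy of Propositions~\ref{P1}--\ref{P6}. The guiding structural observation is that the PPs in rows 12--14 of Table~\ref{T3} have $L_i(X) = bX$ (a monomial in $X$) and $L_i'(X) = X^q \pm X$ (a two-term linearized polynomial after setting $\delta = 0$), whereas the PPs of Table~\ref{T2} have multi-term $L_t(X)$ together with the three-term $L_t'(X) = \Tr_m^{3m}(X) = X + X^q + X^{q^2}$.

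For QM-inequivalence, suppose $f_t(X) = \alpha g(\beta X^d)$ with $\alpha, \beta \in \F_{q^3}^*$ and $\gcd(d, q^3-1) = 1$, where $g$ is a PP from rows 12--14. Expanding, $\alpha g(\beta X^d) = \alpha b \beta X^d + \alpha L_i'(\beta X^d)^s$. When $s$ is not a $p$-th power (the degenerate case can be handled analogously), the second summand contributes no linearized terms, so the linearized part of $\alpha g(\beta X^d)$ is just the monomial $\alpha b \beta X^d$. Matching this against the linearized part $L_t(X)$ of $f_t(X)$ would force $L_t(X)$ to be a monomial, which is ruled out for each $t \in \{1,\ldots,6\}$ by direct inspection of the coefficients: the theorems in Section~\ref{S3} require $a \in \mu_{q^2+q+1}\setminus\{1\}$, and under this constraint both nonzero $X$-coefficients of $L_t(X)$ cannot simultaneously collapse.

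For linear inequivalence, suppose $f_t(X) = A_2(g(A_1(X)))$ for some linearized permutation polynomials $A_1, A_2$ of $\F_{q^3}$. By additivity of $A_2$, $f_t(X) = A_2(b A_1(X)) + A_2(L_i'(A_1(X))^s)$. Equating the linearized and nonlinearized parts yields
\begin{equation*}
L_t(X) = A_2(b A_1(X)) \quad \text{and} \quad \Tr_m^{3m}(X)^s = A_2(L_i'(A_1(X))^s).
\end{equation*}
Interpreting the second identity as an equation of functions on $\F_{q^3}$: since $A_1$ is a bijection and the image of $L_i'$ is $\ker \Tr_m^{3m}$ (of size $q^2$), the image of $L_i'(A_1(X))^s$ contains at least $1 + (q^2-1)/\gcd(s, q^3-1)$ elements, while the image of $\Tr_m^{3m}(X)^s$ equals $\F_q$ (using $\gcd(s, q-1) = 1$), of size $q$. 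Because $A_2$ is a bijection these sizes must coincide, forcing $\gcd(s, q^3-1) \geq q+1$, equivalently $\gcd(s, q^2+q+1) \geq q+1$. Choosing any exponent $s$ admitted by Table~\ref{T2} with $\gcd(s, q^2+q+1) < q+1$ (for instance, suitably small $s$, which always exists) yields a contradiction. The main obstacle is the residual regime of large $s$ satisfying $\gcd(s, q^2+q+1) \geq q+1$, notably the row~14 case $s \equiv 0 \pmod{q^2+q+1}$, where $\Tr_m^{3m}(X)^s$ collapses functionally to $\Tr_m^{3m}(X)^3$ and the cardinality argument no longer applies; this case is settled by a direct coefficient comparison that exploits the specific form of $L_t(X)$ to derive constraints on $a$ incompatible with those imposed by the corresponding theorem of Section~\ref{S3}.
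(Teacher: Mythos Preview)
You derive the same decomposition the paper does, namely
\[
L_t(X)=A_2\bigl(bA_1(X)\bigr)\quad\text{and}\quad \Tr_m^{3m}(X)^s=A_2\bigl(L_i'(A_1(X))^s\bigr),
\]
but you then discard the first equation and build an image--cardinality argument on the second one. That is where the proof goes astray. The first equation already gives the whole contradiction: $A_2$, multiplication by $b\in\F_{p^m}^*$, and $A_1$ are all bijections of $\F_{q^3}$, so $L_t$ would have to be a permutation of $\F_{q^3}$. But $L_t$ is never a permutation: in the decomposition underlying Lemma~\ref{L22} one has $L_t=A_1+A_2$ (with $a_3=1$, $A_3=\Tr_m^{3m}$), and each $A_i$ satisfies $A_i(X)^q=a_i^{q}A_i(X)$, so $\operatorname{Im}A_i$ lies in a one--dimensional $\F_q$--subspace of $\F_{q^3}$. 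Hence $\operatorname{Im}L_t$ sits inside a subspace of $\F_q$--dimension at most~$2$, and $L_t$ cannot be surjective. This is exactly the paper's argument, stated in one line.

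Your route via the second equation is not just longer; it is genuinely incomplete. The cardinality comparison only rules out exponents $s$ with $\gcd(s,q^2+q+1)<q+1$, and you explicitly flag the ``residual regime'' (in particular the row~14 condition $(q^2+q+1)\mid s$) as unresolved, deferring it to an unspecified ``direct coefficient comparison''. Since the proposition must hold for \emph{every} admissible $s$, this is a real gap, not a cosmetic one.

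For the QM part, your sketch is in the right spirit but also looser than needed. Once linear inequivalence is secured by the one--line argument above, QM--inequivalence follows exactly as in Proposition~\ref{P1}: if $d=p^k$ then a QM relation is a special case of a linear one (with $A_1(X)=\beta X^{p^k}$, $A_2(X)=\alpha X$), already excluded; if $d$ is not a $p$--power then $bX^d$ contributes no $p$--linearized term, so $L_t(X)$ has nothing to match. Your phrase ``the degenerate case can be handled analogously'' is precisely this reduction, but it should be stated rather than waved at.
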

\begin{proof}
We discuss the inequivalence of PPs given in Table~\ref{T2} with the PPs from the twelfth to fourteenth rows of Table \ref{T3}. Since $L(X)=bX$ in all the PPs from the twelfth to fourteenth rows of Table \ref{T3}. Now, if possible, assume that the PP $f_{t}(X)$ of Table~\ref{T2} is linearly equivalent to the PPs given in the PPs from the twelfth to fourteenth rows of Table \ref{T3}. It follows that $L_{t}(X)=A_2(bA_{1}(X))$ for some linearized permutation polynomials $A_{1}(X)$ and $A_{2}(X)$, which implies that $L_{t}(X)$ (in Table~\ref{T2}) is a PP, a contradiction. This completes the proof.
 \end{proof}

\begin{prop}\label{P8}
The PPs in Table~\ref{T2} are both linearly and QM-inequivalent to the PPs in the last three rows of Table~\ref{T3}.
\end{prop}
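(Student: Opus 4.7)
The plan is to combine the QM-inequivalence technique of Proposition~\ref{P1} with the linear-inequivalence strategy of Proposition~\ref{P7}, treating the three rows separately since their structural features differ. For QM-inequivalence, suppose $f_t(X) = a\, g(bX^d)$ where $g$ is a PP from one of rows 16--18, $a,b \in \mathbb F_{q^3}^*$, and $1\le d\le q^3-2$ with $\gcd(d, q^3-1)=1$. As in the proof of Proposition~\ref{P1}, if $d=p^j$ for some $j\ge 0$, then $X\mapsto bX^{p^j}$ is a linearized permutation and the equivalence reduces to a linear one, while if $d$ is not a power of $p$, then $L_i(bX^d)$ fails to be a linearized polynomial, contradicting the rigid shape $L_t(X) + \Tr_m^{3m}(X)^s$. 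Thus, it suffices to rule out linear equivalence.

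For rows~17 and~18, where $L_i(X)=X$, suppose $f_t(X) = A_2(g_i(A_1(X)))$ for linearized permutation polynomials $A_1,A_2$. Since $s>1$ and $\gcd(s, q-1)=1$, for generic choices of $s$ the polynomial $\Tr_m^{3m}(X)^s$ carries no monomial of the form $cX^{q^i}$, so the linearized part of $f_t$ is exactly $L_t(X)$. Matching the linearized parts of the two sides forces $L_t(X) = A_2(A_1(X))$ and hence $L_t$ to be a linearized permutation polynomial of $\mathbb F_{q^3}$, mirroring the argument in Proposition~\ref{P7}. I would then verify, for each $t\in\{1,\ldots,6\}$ and admissible $a$ from Theorems~\ref{T31}--\ref{T36}, that the Dickson associate determinant of $L_t$ vanishes, so $L_t$ cannot be a linearized PP. For row~16, where $L$ and $L'$ are generic linearized polynomials but $s$ is constrained by $(q^k-1)s\equiv 0 \pmod{q^n-1}$ with $\gcd(n,k)>1$, I would use the exponent-incompatibility argument of Proposition~\ref{P3}: setting $n = 3m$ and examining the divisors $d=\gcd(3m, k)$ with $2 \le d < 3m$, the condition forces $s$ to be a multiple of $(q^{3m}-1)/(q^d-1)$, and one exhibits $s$ (for instance, $s=q-2$, which is coprime to $q-1$) failing this divisibility for every admissible $d$, ruling out any such equivalence.

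The main obstacle will be the Dickson-determinant verification for rows~17 and~18: while Proposition~\ref{P7} handled $L_i(X)=bX$ (a monomial) immediately, here one must check, for each of the six polynomials $L_t$ in Table~\ref{T2} and for each admissible $a$, that $L_t$ is not a linearized PP. A secondary delicate point is row~16, where the polynomials $L$ and $L'$ are not explicitly prescribed, forcing a uniform argument that applies to every admissible pair $(L,L')$ and exploits the exponent condition rather than coefficient comparisons.
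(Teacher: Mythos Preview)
Your treatment of rows 17 and 18 is essentially the paper's: it simply invokes Proposition~\ref{P7}, since those rows again have $L_i(X)=X$. The ``main obstacle'' you flag is not one. Each $L_t$ in Table~\ref{T2} is, by construction from Lemma~\ref{L22}, of the form $A_1(X)+u_2A_2(X)$ with $A_i(X)=X^{q^2}+a_iX^q+a_i^{1+q^2}X$ and $a_i\in\mu_{q^2+q+1}$; since $A_i(X)^q=a_i^qA_i(X)$, each $A_i$ has $\F_q$-rank~$1$, so $L_t$ has image of $\F_q$-dimension at most~$2$ and is never a permutation of $\F_{q^3}$. No Dickson-determinant computation is needed.

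For row 16 your approach diverges from the paper's and contains a genuine gap. In that row the base is $q$ (not $p$), so the ambient field $\F_{q^3}$ forces $n=3$, not $n=3m$. With $n=3$ the constraint $\gcd(n,k)>1$ gives $3\mid k$, and then $q^3-1\mid q^k-1$, so the divisibility condition $(q^k-1)s\equiv 0\pmod{q^3-1}$ holds for \emph{every} $s$; your exponent-incompatibility strategy therefore cannot separate the families. The paper instead exploits the specific shape of $L'(X)$ in \cite[Corollary~6.2]{YD2}: once $3\mid k$, the polynomial $L'(X)$ vanishes identically on $\F_{q^3}$, so the purported PP reduces to the linearized polynomial $L(X)$ alone, which is then immediately seen to be linearly and QM-inequivalent to any $f_t$ (the latter containing the non-linear block $\Tr_m^{3m}(X)^s$ with $s>1$). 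This structural collapse is the missing idea in your plan.
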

\begin{proof}
First, we will discuss the inequivalence of PPs given in Table~\ref{T2} with the PP $f(X)=L(X)+L'(X)^s$ in the sixteenth row of Table~\ref{T3} when $n=3$. Note that, when $n=3$, the condition $d=\gcd(n,k)> 1$ of \cite[Corollary 6.2]{YD2} implies that $d=3$. Therefore, $L'(X)$ becomes $0$, and so $f(X)=L(X)$. Thus, it follows that $f(X)$ is both linearly and  QM-inequivalent to $f_t(X)$, for $1\leq t\leq 6$. 

Next, the inequivalence of PPs given in Table~\ref{T2} with the last two rows of Table~\ref{T3} readily follows from the proof of the Proposition~\ref{P7}. 
\end{proof}

\begin{rmk}
It is easy to see that our classes of permutation polynomials are not subclasses of the PPs given in the fifteenth row of Table~\ref{T3} since $L(X)$ and $B(X)$ are in $\F_{q}[X]$.
\end{rmk}

\section{Conclusion}\label{S5}
In this paper, we proposed six new classes of permutation polynomials over $\F_{q^3}$ and provided the explicit expressions for their compositional inverses. Here, we used a technique given by Wu and Yuan (2023) and analyzed certain equations over finite fields. We also show that these classes of permutation polynomials are not equivalent to the known classes of permutation polynomials of the same type. It would be interesting to explore such permutation polynomials over $\F_{q^3}$ when $\delta \neq 0.$

\end{document}